\newtheorem{thm}{Theorem}[section]
\newtheorem{lem}[thm]{Lemma}
\newtheorem{cor}[thm]{Corollary}
\newtheorem{conj}[thm]{Conjecture}
\theoremstyle{definition} 
\newtheorem{meth}{Method} 
\newtheorem{exa}[thm]{Example}
\newcommand{\Z}{\mathbb{Z}}
\def\BHR{\mathop{\rm BHR}}
\begin{document}

\title{The Buratti-Horak-Rosa Conjecture Holds \\ for Some Underlying Sets of Size Three }

\author{Pranit Chand} 
\author{M.~A.~Ollis\footnote{Corresponding author.  Email: \texttt{matt\_ollis@emerson.edu}}}

\affil{Marlboro Institute for Liberal Arts and Interdisciplinary Studies, Emerson College, Boston, MA 02116, USA} 

\date{}

\maketitle

\begin{abstract}
The Buratti-Horak-Rosa Conjecture concerns the possible multisets of edge-labels of a Hamiltonian path in the complete graph with vertex labels $0, 1, \ldots, {v-1}$ under a particular induced edge-labeling.  The conjecture has been shown to hold when the underlying set of the multiset has size at most~2, is a subset of~$\{1,2,3,4\}$ or~$\{1,2,3,5\}$, or is $\{1,2,6\}$, $\{1,2,8\}$ or  $\{1,4,5\}$, as well as partial results for many other underlying sets.  We use the method of growable realizations to show that the conjecture holds for each underlying set~$U = \{ x,y,z \}$ when $\max(U) \leq 7$ or when $xyz \leq 24$, with the possible exception of $U = \{1,2,11\}$.
We also show that for any even~$x$ the validity of the conjecture for the underlying set $\{ 1,2,x \}$ follows from the validity of the conjecture for finitely many multisets with this underlying set.

\end{abstract}

\section{Introduction}\label{sec:intro}

The setting for the Buratti-Horak-Rosa (BHR) Conjecture is the complete graph~$K_v$ on vertex set~$\{ 0, 1, \ldots, v-1 \}$.  In this graph, define an induced edge-labeling by 
$$\ell(x,y) = \min( |x-y| , v - |x-y|  ),$$ 
called the {\em length} of the edge between~$x$ and~$y$.   If the vertices are arranged in a circle with the natural order, then the length of the edge between~$x$ and~$y$ corresponds to the smallest number of steps around the circle from~$x$ to~$y$.

The edges of a subgraph~$\Gamma$ of $K_v$ give a multiset of lengths in the range $1 \leq \ell(x,y) \leq \lfloor v/2 \rfloor$.    If a multiset~$L$ of edges arises in this way from a Hamiltonian path~$\Gamma$, then $\Gamma$ {\em realizes}~$L$ or $\Gamma$ is a {\em realization} of~$L$ .  We are interested in determining which multisets~$L$ are realizable.

\begin{exa}\label{ex:init}
Let $L = \{ 2, 4^8, 5^3 \}$ (where exponents indicate multiplicity in a multiset).  The Hamiltonian path
$$[ 6, 10, 5, 1, 9, 11, 2, 7, 3, 12, 8, 4, 0 ]$$
in $K_{13}$
has edge-lengths
$$ 4, 5,4,5,2,4,5,4,4,4,4,4 $$
and so realizes~$L$.
\end{exa}

The BHR Conjecture---originally posed by Buratti for prime~$v$~\cite{West07}, generalized to arbitrary~$v$ by Horak and Rosa~\cite{HR09}, and given an equivalent more succinct formulation by Pasotti and Pellegrini~\cite{PP14b}---is as follows:

\begin{conj}[BHR Conjecture]\label{conj:bhr}
For any multiset~$L$ of size $v-1$ with underlying set~$U \subseteq \{ 1, \ldots, \lfloor v/2 \rfloor \}$, there is a realization of~$L$ in $K_v$ if and only if for any divisor~$d$ of~$v$ the number of multiples of~$d$ in~$L$ is at most~$v-d$.
\end{conj}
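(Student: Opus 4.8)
The statement is a biconditional whose two halves demand completely different tools, so I would prove necessity and sufficiency separately and then combine them. For necessity, fix a divisor $d$ of $v$ and colour each vertex $i$ by its residue $i \bmod d$, producing $d$ colour classes each of size $v/d$. Because $d \mid v$, for any edge $\{x,y\}$ we have $\ell(x,y) \equiv 0 \pmod d$ if and only if $x \equiv y \pmod d$; that is, the edges whose length is a multiple of $d$ are exactly the monochromatic ones. Given a realizing Hamiltonian path, delete all of its edges whose length is a multiple of $d$. What remains is a disjoint union of monochromatic subpaths, one family of them inside each colour class. Since every colour class is nonempty it contributes at least one subpath, so the number of pieces is at least $d$; equivalently, the number of deleted (crossing) edges is at least $d-1$. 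As the path has $v-1$ edges in total, the number of retained edges---precisely the multiples of $d$ in $L$---is at most $(v-1)-(d-1)=v-d$. This yields the stated condition for every divisor $d$, and I would present it as a short standalone lemma.

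For sufficiency---the genuine content of the conjecture---the plan is to use the method of growable realizations to reduce the infinite problem to a finite one and then resolve the finite part. Fixing the underlying set $U$, the divisibility conditions cut out the admissible multiplicity vectors and the admissible orders $v$, and I would organise these into families indexed by how many copies of a distinguished length are present. The key device is a \emph{growable} realization: a Hamiltonian path that opens and closes in a fixed standardised interface so that a constant block of edges, each of the distinguished length, can be spliced into it; splicing converts a realization of $L$ into a realization of $L$ with the multiplicity of that length raised by a fixed step while preserving the interface and keeping every divisor-count below its threshold. Iterating the splice shows that realizability across an entire tail of each family follows from realizability of finitely many small base multisets, which one then exhibits directly.

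The main obstacle is sufficiency, and specifically the passage from this finite reduction to a construction valid for \emph{every} underlying set at once. The reduction is only as good as the growable seeds that drive it: for each admissible small multiset one must build a base realization whose two endpoints simultaneously meet the splicing interface and respect the residue and parity constraints that the divisibility condition imposes at every divisor of $v$, and producing such a seed is delicate and arithmetic-sensitive. Worse, the number and shape of the required base cases, and the step by which the splice advances a multiplicity, depend intricately on the gcd structure of $U$ and on the residues of its elements modulo the candidate $v$, so no single construction presently covers all $U$. This is exactly why the conjecture has so far been settled only underlying-set by underlying-set: the contribution of this paper is to supply the growable seeds and discharge the finite base-case lists for all $U=\{x,y,z\}$ with $\max(U)\le 7$ or $xyz\le 24$ (save possibly $\{1,2,11\}$) and to carry out the finite reduction for every $\{1,2,x\}$ with $x$ even.
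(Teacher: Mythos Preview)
The statement you are addressing is Conjecture~\ref{conj:bhr}, not a theorem; the paper does not prove it and does not claim to. Necessity is simply cited from~\cite{HR09}, and sufficiency is the open problem that the paper attacks only for the specific underlying sets listed in Theorem~\ref{th:main}. There is therefore no ``paper's own proof'' to compare against, and your sufficiency discussion---however accurate as a description of the method of growable realizations---is not a proof of the conjecture. You acknowledge this yourself in your final paragraph: the reduction to finitely many seeds works one underlying set at a time, and no uniform construction of those seeds is known. A proof proposal for a statement that remains open is a category error; what you have written for sufficiency is a fair summary of the strategy of the paper, not a proof of Conjecture~\ref{conj:bhr}.

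Your necessity argument also contains an internal inconsistency. You say you delete the edges whose length \emph{is} a multiple of~$d$---these are the monochromatic edges---but then assert that the surviving pieces are monochromatic subpaths, and a few lines later you describe the deleted edges as ``crossing'' and the retained edges as ``precisely the multiples of~$d$.'' These claims are mutually incompatible. The correct move is to delete the edges whose length is \emph{not} a multiple of~$d$ (the genuine crossing edges); then every remaining edge stays within a single residue class, the surviving subpaths are monochromatic, each of the~$d$ nonempty classes contains at least one of them, so at least~$d-1$ crossing edges were removed, and hence at most $(v-1)-(d-1)=v-d$ edges of length divisible by~$d$ remain. With that one-word fix your argument is the standard one from~\cite{HR09}.
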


The condition on divisors in the conjecture is necessary~\cite{HR09}.  Call a multiset~$L$ whose largest element is at most~$\lfloor v/2 \rfloor$ and that meets this necessary condition {\em admissible}.  Denote the BHR Conjecture for~$L$ by~$\BHR(L)$.

The BHR Conjecture has been completely solved for very few underlying sets.  Two early papers independently covered those of size at most~2~\cite{DJ09, HR09}, followed shortly afterwards by a proof for~$\{1,2,3\}$, the first three element set to be solved~\cite{CD10}.  Subsequent work covers subsets of~$\{1,2,3,4\}$~\cite{OPPS2} or~$\{1,2,3,5\}$~\cite{PP14b}, $\{1,2,6\}$ and $\{1,2,8\}$~\cite{PP14}, and $\{1,4,5\}$~\cite{OPPS2}.  In addition to this, there are strong partial results for a wide variety of underlying sets.  See~\cite[Theorem~1.2]{OPPS2} for a summary of the position as of mid-2021.  We shall use one of these partial results in Sections~\ref{sec:12x} and~\ref{sec:alg}: when~$x$ is even, the multiset $\{ 1^a, 2^b, x^c \}$ is realizable if $a+b \geq x-1$~\cite{PP14}.

The BHR Conjecture fits into a network of related problems, often related to graph decompositions of various types, for example the {\em Seating Couples Around the King's Table} problem~\cite{PM09} and determining whether particular Cayley graphs of the cyclic group admit cyclic decompostions~\cite{PP14}.    Seamone and Stevens~\cite{SS12} ask a more general question that encompasses arbitrary groups and spanning trees, for which the specialization to the cyclic group and Hamiltonian paths is equivalent to the BHR Conjecture.  Perhaps the oldest instance of a problem of this type can be connected to the Walecki Construction of~1892, see~\cite{Alspach08}, which may be viewed as a verification of the BHR Conjecture when the multiset~$L$ is $\{ 1^2, 2^2, \ldots, (m-1)^2, m \}$ or $\{ 1^2, 2^2, \ldots, m^2 \}$.  See~\cite{OPPS} for further discussion and references concerning these connections.

In the next section we introduce the notion of a growable realization, as developed in~\cite{OPPS2}, which is our primary tool, and prove some new technical results providing constraints on when they can exist.    For a given underlying set~$U$, the approach divides the work into $\pi(U) = \prod_{x\in U} x$ cases.  

In Section~\ref{sec:max5} we describe methods that each take a finite list of growable realizations for a multiset with underlying set~$U$ to completely prove the BHR~Conjecture for one of the $\pi(U)$ cases.  We use these methods to complete the proof of the BHR Conjecture when the underlying set has size~3 and the largest element is at most~5. 

In Section~\ref{sec:12x} we show that realizations with some parameters must be growable.  This lets us build on the work of Pasotti and Pellegrini in~\cite{PP14} for the problem with~$U = \{ 1,2,x \}$ for some even $x$.  For a fixed~$x$ in this instance, we show that if the BHR Conjecture fails, then it must fail for a multiset~$L = \{ 1^a, 2^b, x^c \}$ of size~$v-1$ with $a+b < x-1$ and $v < x^2 -1$.  This is used to prove the conjecture for underlying sets~$\{1,2,10\}$ and~$\{1,2,12\}$.

In Section~\ref{sec:alg} we develop and implement an algorithm for generating a finite set of realizations to prove one of the $\pi(U)$ cases.   Using this we are able to prove the conjecture for a variety of three-element underlying sets.  

Taking the results of these  approaches together, we prove Theorem~\ref{th:main} in Section~\ref{sec:final}. 

\begin{thm}\label{th:main}
Let~$U$ be a set of size~$3$ with $\max{(U)} \leq 7$ or $\pi(U) \leq 24$.  With the possible exception of $U = \{1,2,11\}$, the BHR Conjecture holds for multisets with underlying set~$U$. 
\end{thm}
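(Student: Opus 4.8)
The plan is to verify that every set~$U$ of size~$3$ with $\max(U) \leq 7$ or $\pi(U) \leq 24$, apart from $\{1,2,11\}$, is disposed of by one of the three methods of Sections~\ref{sec:max5}--\ref{sec:alg} or by a result already in the literature, each of which establishes~$\BHR(L)$ for every admissible~$L$ with underlying set~$U$ by exhibiting, in each of the $\pi(U)$ cases, a finite list of growable realizations. Since all the mathematical content sits in the earlier sections, the proof of Theorem~\ref{th:main} is essentially a finite enumeration together with a lookup.

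First I would list the sets involved. Writing $U = \{x,y,z\}$ with $x < y < z$, the condition $\max(U) \leq 7$ gives the $\binom{7}{3} = 35$ three-element subsets of $\{1,\dots,7\}$. Any further set must have $z \geq 8$ and $xyz \leq 24$; this forces $x = 1$, and then $y = 2$ with $z \in \{8,9,10,11,12\}$ or $y = 3$ with $z = 8$. So the full list consists of these $35$ sets together with $\{1,2,8\}$, $\{1,2,9\}$, $\{1,2,10\}$, $\{1,2,11\}$, $\{1,2,12\}$ and $\{1,3,8\}$ — forty-one sets in all.

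Next I would sort the forty sets other than $\{1,2,11\}$ into four groups, recording for each the tool that resolves it: the ten subsets of $\{1,2,3,4,5\}$ are covered by Section~\ref{sec:max5}; the sets $\{1,2,10\}$ and $\{1,2,12\}$ are covered by combining the reduction of Section~\ref{sec:12x} with the bound $a+b \geq x-1$ of~\cite{PP14} and a finite computation; the sets $\{1,2,6\}$ and $\{1,2,8\}$ are covered by~\cite{PP14}; and the remaining twenty-six sets — all those with $\max(U) \in \{6,7\}$ not already listed, together with $\{1,2,9\}$ and $\{1,3,8\}$ — are covered by the algorithm of Section~\ref{sec:alg}, whose output in each instance is the required finite certifying set of growable realizations. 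I would present this as a single table, one row per~$U$, so that completeness of the case analysis can be checked at a glance. The set $\{1,2,11\}$ is reached by none of these tools — $x = 11$ is odd, so even the reduction of Section~\ref{sec:12x} does not apply to it — and so it remains as the stated exception.

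The real difficulty is not in the bookkeeping but upstream, in the last group: unlike the structured arguments of Sections~\ref{sec:max5} and~\ref{sec:12x}, the success of the algorithm of Section~\ref{sec:alg} on a given~$U$ can only be asserted \emph{a posteriori}, once it has actually produced growable realizations covering all $\pi(U)$ cases. The main obstacle, therefore, is confirming that this computation does terminate successfully for each of the two dozen or so underlying sets in the last group within feasible resources, and that $\{1,2,11\}$ is genuinely the only set in the stated range for which no available method — the literature families, the $\max(U) \leq 5$ results, the even-$x$ reduction for $\{1,2,x\}$, or the algorithm — produces an answer.
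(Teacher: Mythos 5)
Your overall strategy---enumerate the forty-one qualifying sets and assign each to one of the available tools---is exactly the paper's, and your enumeration (the thirty-five subsets of $\{1,\dots,7\}$ together with $\{1,2,8\}$, $\{1,2,9\}$, $\{1,2,10\}$, $\{1,2,11\}$, $\{1,2,12\}$ and $\{1,3,8\}$) is correct. Most of your assignments also match the paper's: the subsets of $\{1,2,3,4,5\}$ go to Corollary~\ref{cor:max5}, the sets $\{1,2,6\}$ and $\{1,2,8\}$ to~\cite{PP14}, the sets $\{1,2,10\}$ and $\{1,2,12\}$ to Theorem~\ref{th:12x}, and the bulk of the remaining sets to the algorithm of Section~\ref{sec:alg}.

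The one concrete gap is $\{2,4,6\}$. You place it in the algorithm group (your count of twenty-six includes it), but Theorem~\ref{th:alg} certifies only twenty-five underlying sets and $\{2,4,6\}$ is not among them; as you yourself note, the algorithm's success can only be asserted for sets on which it has actually been run to completion, so your table has a hole at this entry. The paper closes the case by a short separate argument: every element of $\{2,4,6\}$ is even, so an admissible multiset $L$ of size $v-1$ forces $v$ to be odd (otherwise $L$ contains $v-1 > v-2$ multiples of~$2$, violating the divisor condition); for odd $v$ the map $t \mapsto 2t$ is an automorphism of $\Z_v$, and applying it to a realization of the corresponding multiset with underlying set $\{1,2,3\}$, which exists by~\cite{CD10}, produces the required realization for $\{2,4,6\}$. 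With that reduction added, your case analysis is complete, and your identification of $\{1,2,11\}$ as the unique set in range reached by no available tool agrees with the paper.
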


Existing results (stated above) give~10 instances of underlying sets~$U$ of size~3 for which the BHR~Conjecture is known to hold.  Theorem~\ref{th:main} adds a further~27 underlying sets to this list.

\section{Growable Realizations}\label{sec:grow}

If a multiset~$L$ has an ``$x$-growable" realization for~$L$, then we can use it to produce a realization for $L \cup \{ x^{kx} \}$ for any~$k \geq 1$.   When a realization is ``$x$-growable" for multiple values of~$x$, this is a powerful tool for constructing realizations for a wide variety of multisets.  In this section we give the necessary definitions and constructions that we use.  Proofs of their correctness may be found in~\cite{OPPS2}.  

The method works via an embedding of $K_v$ into~$K_{v+x}$, for some $x \leq v/2$.    For some fixed~$m \in \{ 0, 1, \ldots, v-1 \}$, define the {\em growth embedding} by
$$
y \mapsto
\begin{cases}
y \hspace{2mm}  \text{when } y\leq m,  \\
y+x \hspace{2mm}  \text{otherwise.}
\end{cases}
$$

Let $\bm{h} = [h_1, \ldots, h_v]$ be a realization of a multiset~$L$.  Take $x$ and $m$ as in the growth embedding.  If 
each $y$ with $m-x < y \leq m$ is incident with exactly one edge whose length is increased by the growth embedding and there are no other edges whose lengths are increased, then say that $\bm{ h}$ is {\em $x$-growable at $m$}.  Let $X = \{ x_1, x_2, \ldots, x_k \}$.  If $\bm{ h}$ is $x_i$-growable for each $x_i \in X$ then say that $\bm{ h}$ is {\em $X$-growable}.

\begin{thm}\label{th:grow}{\rm \cite{OPPS2}}
Suppose a multiset~$L$ has an $X$-growable realization. Then for each~$x \in X$, the multiset $L \cup \{ x^x \}$ has an  $X$-growable realization.
\end{thm}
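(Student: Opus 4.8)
The plan is to show directly that an $X$-growable realization $\bm{h}$ of $L$ can be transformed, via the growth embedding with an appropriate choice of $m$, into an $X$-growable realization of $L \cup \{x^x\}$ for each fixed $x \in X$. First I would fix $x \in X$ and a value $m$ witnessing that $\bm{h}$ is $x$-growable at $m$; applying the growth embedding $y \mapsto y$ (for $y \le m$), $y \mapsto y+x$ (otherwise) to every vertex label in the sequence $\bm{h} = [h_1,\dots,h_v]$ produces a Hamiltonian path $\bm{h}'$ in $K_{v+x}$ on the vertex set $\{0,1,\dots,v+x-1\} \setminus \{m+1,\dots,m+x\}$. By the definition of $x$-growable at $m$, every edge length is preserved except that the (unique) edge at each vertex $y$ with $m-x < y \le m$ has its length increased; I would check that each such length increases by exactly $x$ (the endpoint crossing the gap moves by $x$, and since the lengths in a Hamiltonian path of $K_v$ are at most $\lfloor v/2 \rfloor \le v/2 < v+x-\lfloor(v+x)/2\rfloor$, no wraparound reduction interferes), so $\bm{h}'$ realizes a multiset on $x$ fewer-than-$v+x-1$ edges whose length multiset is $L$ with $x$ of its entries each increased by $x$.

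The next step is to reinsert the $x$ missing vertices $m+1, \dots, m+x$ and the edge lengths $x^x$. Here I would use the standard ``patching'' idea behind Theorem~\ref{th:grow}: there are exactly $x$ edges of $\bm{h}'$ that were lengthened, one incident to each of $m-x+1, \dots, m$ (the $x$-growability hypothesis gives precisely this), and I would argue these $x$ edges are pairwise vertex-disjoint or otherwise arrangeable so that each can be subdivided (or an endpoint rerouted) to pass through one of the new vertices $m+j$, restoring that edge to its original length in $L$ and contributing two edges — or rather, contributing the needed length-$x$ edges. Concretely, an edge between some $a \le m$ and some $b > m$ that became an edge of length $\ell(a,b)+x$ in $K_{v+x}$ can be rerouted through $m+j$ so that we get the original length together with an edge of length $x$; doing this consistently for all $x$ lengthened edges absorbs all $x$ new vertices and adds exactly $x$ edges of length $x$, yielding a Hamiltonian path in $K_{v+x}$ realizing $L \cup \{x^x\}$. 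I would then verify that the resulting realization is still $x_i$-growable for every $x_i \in X$: the growth embedding acts locally near the relevant window, and the new vertices and new length-$x$ edges can be shown not to create extra lengthened edges under any of the other growth embeddings (possibly after choosing the original witnessing values $m_i$ compatibly, or re-deriving them).

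The main obstacle I anticipate is precisely this last point — preserving $X$-growability for \emph{all} of $X$ simultaneously after the surgery, not merely producing some realization of $L \cup \{x^x\}$. One must track how the inserted vertices and edges sit relative to each $x_i$-growth window and confirm the ``exactly one lengthened edge per window vertex, none elsewhere'' condition survives; this is a bookkeeping-heavy verification rather than a conceptual difficulty, and is exactly the kind of detail that the cited construction in~\cite{OPPS2} handles. A secondary, milder obstacle is confirming the no-wraparound claim for the lengthened edges: one needs $\ell(a,b) + x \le \lfloor (v+x)/2 \rfloor$, which follows since $\bm{h}$ was $x$-growable with $x \le v/2$ and the original lengths are at most $\lfloor v/2\rfloor$, but it should be stated explicitly. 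Since the excerpt explicitly defers the correctness proofs to~\cite{OPPS2}, in the paper itself I would simply cite that reference; the sketch above is how I would reconstruct the argument if needed.
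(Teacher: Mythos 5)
Your construction is the same as the paper's: apply the growth embedding at the witnessing value $m$, then insert each missing vertex $y+x$ (for $m-x<y\leq m$) between $y$ and its unique lengthened neighbour, recovering the original length plus a new edge of length $x$. The paper likewise gives only this construction and defers the detailed verification (including preservation of $X$-growability) to~\cite{OPPS2}, so your sketch matches its level of rigour and its approach.
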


\begin{proof}[Proof Construction]

Let $\bm{ g} = [g_1, \ldots, g_v]$ be an $X$-growable realization of a multiset~$L$.  Take $x \in X$ with $\bm{ g}$  $x$-growable at $m$.  

Apply the growth embedding to obtain a (non-Hamiltonian) path $\bm{ h'}$ in $K_{v+x}$.  Each element $y$ with $m-x < y \leq m$ is adjacent to exactly one element~$z$ in the original realization that is mapped to~$z+x$, meaning that $y$ is adjacent to~$z+x$ in~$\bm{ h'}$.   Insert the element~$y+x$ between them  for each of these~$y$ values  to give a Hamiltonian path~$\bm{ h}$ in~$K_{v+x}$.  The Hamiltonian path~$\bm{ h}$ is the required $X$-growable realization of $L \cup \{ x^x \}$.
\end{proof}

Repeated applications of Theorem~\ref{th:grow} gives the following corollary.

\begin{cor}\label{cor:multigrow}{\rm \cite{OPPS2}}
Let $X = \{x_1, x_2, \ldots, x_k\}$.
If a multiset~$L$ has an $X$-growable realization, then the multiset $L \cup \{ x_1^{\ell_1x_1}, x_2^{\ell_2x_2}, \ldots, x_2^{\ell_2x_2} \}$ has an  $X$-growable realization for any $\ell_1, \ell_2, \ldots, \ell_k \geq 0$.
\end{cor}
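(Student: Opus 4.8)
The plan is to argue by induction on the quantity $N = \ell_1 + \ell_2 + \cdots + \ell_k$, using Theorem~\ref{th:grow} as the single engine. The feature of that theorem that matters here is not merely that it produces a realization of $L \cup \{ x^x \}$, but that it produces an \emph{$X$-growable} one; this is exactly what lets the output of one application serve as the input of the next, so that the construction can be iterated.

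For the base case $N = 0$ we have $\ell_1 = \cdots = \ell_k = 0$, so the target multiset is $L$ itself, which has an $X$-growable realization by hypothesis. For the inductive step, suppose $N \geq 1$ and that the claim holds for every choice of nonnegative integers summing to $N - 1$. Choose an index $j$ with $\ell_j \geq 1$, and set $\ell_j' = \ell_j - 1$ and $\ell_i' = \ell_i$ for $i \neq j$; these sum to $N-1$, so by the inductive hypothesis the multiset $L' = L \cup \{ x_1^{\ell_1' x_1}, \ldots, x_k^{\ell_k' x_k} \}$ has an $X$-growable realization. Since $x_j \in X$, applying Theorem~\ref{th:grow} to $L'$ with $x = x_j$ yields an $X$-growable realization of $L' \cup \{ x_j^{x_j} \}$. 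Because multiset union is commutative and associative, $L' \cup \{ x_j^{x_j} \} = L \cup \{ x_1^{\ell_1 x_1}, \ldots, x_k^{\ell_k x_k} \}$, which completes the induction.

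There is essentially no obstacle: all the content is already packaged inside Theorem~\ref{th:grow}, and the only point needing any care is the bookkeeping remark that the additional copies can be accumulated one block of size $x_j$ at a time, in any order, which is immediate from commutativity of multiset union together with the fact that $X$-growability is preserved at every step and so is never ``used up.'' (A minor note: the statement as written repeats $x_2^{\ell_2 x_2}$; the intended final summand is $x_k^{\ell_k x_k}$, and the argument above produces precisely that multiset.)
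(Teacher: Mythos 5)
Your induction on $N=\ell_1+\cdots+\ell_k$ is correct and is exactly the formalization of what the paper does, which is simply to state that repeated applications of Theorem~\ref{th:grow} give the corollary (relying, as you note, on the fact that the theorem returns an $X$-growable realization so the process can be iterated). No further comment needed.
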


\begin{exa}\label{ex:grow}
The realization for $\{ 2, 4^8, 5^3\}$ in Example~\ref{ex:init} is $\{4,5\}$-growable.  It is $4$-growable at~3,~4 and~5 and $5$-growable at~$6$.  If we apply Theorem~\ref{th:grow} twice, first for 4-growability (at $3$) and second for 5-growability (at 10, the point to which the growth embedding moves the 5-growability value), we obtain the $\{4,5\}$-growable realization
$$[ 10, 15, 19, 14, 9, 5, 1, 18, 20, 2, 6, 11, 16, 12, 7, 3, 21, 17, 13, 8, 4, 0 ]  $$
for $\{ 2, 4^{12}, 5^8 \}$. 
It is 4-growable at~3 (and other values) and 5-growable at~10 (and other values).
\end{exa}

The following lemma gives some instances of admissible multisets that do not have particular growable realizations.  

\begin{lem}\label{lem:inadgrow}
Let $L = \{ x^a, y^b, z^c \}$ with $x < y < z$.  Set $v = a+b+c +1$ and $X \subseteq \{x,y,z\}$.  There is no $X$-growable realiation for~$L$ in the following situations: 
\begin{enumerate}
\item $z \in X$ and $v = 2z$,
\item $z \in X$ and $c < z-y$,
\item $x \in X$ and $a+b < z-x-1$,
\item $y \in X$ and $a+b < z-y-1$.
\end{enumerate}
\end{lem}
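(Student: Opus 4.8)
The plan is to assume, toward a contradiction, that $\bm h$ is a $w$-growable realization of $L$ at some index $m$, where $w\in X$ is taken to be $z$ in cases~(1) and~(2), $x$ in case~(3), and $y$ in case~(4); throughout one may assume $w-1\le m\le v-2$, since outside this range either no edge has its length increased or the construction of Theorem~\ref{th:grow} fails to produce a realization of $L\cup\{w^w\}$. The first step is to record the geometric content of the growth embedding: an edge of $\bm h$ has its length increased exactly when that length is less than $v/2$ and the short arc of the edge passes through the gap between the vertices $m$ and $m+1$; call these the \emph{crossing} edges of $\bm h$. Putting $G=\{m-w+1,\dots,m\}$, an edge with both endpoints in $G$ has length at most $w-1$ and its short arc lies entirely within $\{0,\dots,m\}$, so it is not crossing; together with the defining property of $w$-growability (each element of $G$ lies on exactly one crossing edge, and there are no further crossing edges) this forces there to be exactly $w$ crossing edges $e_0,\dots,e_{w-1}$, with $e_j$ incident to the vertex $g_j:=m-w+1+j$.

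Next I would bound the length $\ell_j$ of $e_j$. Since $g_j\le m$ while the short arc of $e_j$ contains both $m$ and $m+1$, tracing that arc from $g_j$ either climbs monotonically past $m$ to $m+1$, giving $\ell_j\ge m+1-g_j=w-j$, or it descends from $g_j$, wraps through $0$ and $v-1$, and reaches the pair $m,m+1$ from above, which takes at least $v-w+1+j\ge 2z-w+1>w$ steps; as $\ell_j\le z$, the descending alternative is impossible when $w=z$, and in all cases $\ell_j\ge w-j$. The four conclusions then drop out. In case~(1), $v=2z$ makes every length-$z$ edge have length exactly $v/2$, so no such edge is ever increased and every crossing edge has length $<z$; but the bound gives $\ell_0\ge m+1-g_0=z$, a contradiction. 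In case~(2), $\ell_j\ge z-j$ forces $\ell_j=z$ for each $j\in\{0,1,\dots,z-y-1\}$ (the only element of $\{x,y,z\}$ exceeding $y$), so $\bm h$ contains at least $z-y$ distinct length-$z$ edges, i.e.\ $c\ge z-y$, contradicting $c<z-y$. For cases~(3) and~(4): if $v=2z$ then, since $\bm h$ realizes $L$, the multiset $L$ is admissible, so the condition at the divisor $d=z$ forces $c\le z$, whereas $a+b<z-w-1$ gives $c=v-1-a-b>z+w>z$, which is impossible; hence $z<v/2$, so each of the $v$ length-$z$ edges of $K_v$ has a short arc of exactly $z$ gaps and, by rotational symmetry, each gap lies on the short arc of exactly $z$ of them, whence at least $c-(v-z)$ of $\bm h$'s length-$z$ edges are crossing edges. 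As $\bm h$ has only $w$ crossing edges altogether, $c-(v-z)\le w$, that is $a+b\ge z-1-w$, contradicting the hypothesis $a+b<z-w-1$.

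I expect the main obstacle to be the bookkeeping in the first two steps rather than the case analysis: one must verify that ``$w$-growable at $m$'' really does pin the crossing edges down as a perfect matching with $G$ (no crossing edge internal to $G$, none omitted), and one must trace short arcs with enough care — especially near the vertices $0$ and $v-1$, and when the arc climbing out of $g_j$ wraps past $v-1$, and in the boundary situation $v=2z$ — to be certain that the bounds $\ell_j\ge w-j$ and the count ``exactly $z$ of the $v$ length-$z$ edges cross a given gap'' hold without exception. Once those facts are secured, each of the four implications is a one-line consequence.
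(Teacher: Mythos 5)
Your proof is correct and follows essentially the same route as the paper's: the forced length-$z$ crossing edges at the vertices $m-z+1,\dots,m-y$ give Items~1 and~2, and the count that at most $w$ of the $c$ length-$z$ edges can cross the gap (equivalently, $c\le v-z+w$) gives Items~3 and~4. Your write-up is somewhat more careful than the paper's in pinning down the crossing-edge characterization and in explicitly disposing of the boundary case $v=2z$ in Items~3 and~4 via admissibility, but these are presentational refinements rather than a different argument.
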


\begin{proof}
Consider Item~1.  Suppose~$v = 2z$ and $\bm{h}$ is a realization that is~$z$-growable at~$m$.  To satisfy the definition, the vertex $m-z+1$ must be connected to a vertex with label at least~$m+1$ and this must give an edge that is lengthened by the growth embedding.  However, as $v=2z$,  such edges are not lengthened by the growth embedding so we cannot have a $z$--growable realization. 

Now consider Item~2.  Suppose $c < z-y$ and we have a realization~$\bm{h}$ that is~$z$-growable at~$m$.  The vertices $m-z+1$ to $m-y$ must each be incident with an edge that is lengthened, the only option for which is one of length~$z$.  However there are~$z-y > c$ vertices in the list, so this is impossible.

For Item~3, suppose $a+b < z-x-1$ and $\bm{h}$ is a realization that is~$x$-growable at~$m$.  Each edge of length $z$ joins two elements~$p$ and $p+z$ and $x$-growability means that we cannot have $m- z < p  < m-x+1$.  Hence $c \leq v - z + x$.  Substituting $v = a+b+c+1$ gives the contradictory $a+b \geq z-x-1$.   The argument for Item~4 is the same, with $y$ in place of~$x$.
\end{proof}

Lemma~\ref{lem:inadgrow} is useful in future sections for avoiding fruitless searches for particular growable realizations.

\section{Underlying Sets with Largest Element~5}\label{sec:max5}

In this section we describe in more detail some possible ways we might move from a small list of growable realizations to a complete solution for one of the $\pi(U)$ cases.  It is often the case that a method misses some small subcases.  The following result, proved by a computation of Meszka~\cite{Meszka}, deals with this situation.

\begin{thm}\label{th:small}
Let $L$ be an admissible multiset of size~$v-1$ with underlying set~$U$. If $v \leq 19$ or $v=23$ then $\BHR(L)$ holds.  
\end{thm}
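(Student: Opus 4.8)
The statement is finite in scope, so the plan is to establish it by direct verification. For each $v \in \{2, 3, \ldots, 19\} \cup \{23\}$ there are only finitely many admissible multisets $L$ of size $v-1$, and for each of them one must exhibit a realization; once all of these are in hand the theorem follows.

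First I would enumerate the candidate multisets. A multiset $L$ of size $v-1$ whose underlying set lies inside $\{1, \ldots, \lfloor v/2 \rfloor\}$ is specified by its multiplicity vector $(c_1, \ldots, c_{\lfloor v/2 \rfloor})$ of nonnegative integers summing to $v-1$, and these are produced by a routine enumeration of compositions. From this list I would retain only the admissible ones, that is, those for which $\sum_{d \mid i} c_i \leq v-d$ holds for every proper divisor $d$ of $v$; the divisors $d = 1$ and $d = v$ impose nothing, since $L$ has exactly $v-1$ elements and none of them is a multiple of $v$. This yields the finite list over which the theorem quantifies.

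Second, for each admissible $L$ I would find a realization by a backtracking search for a Hamiltonian path in $K_v$. Starting from $h_1 = 0$, one extends a partial path $[h_1, \ldots, h_j]$ by an unused vertex $h_{j+1}$ whose edge-length $\ell(h_j, h_{j+1})$ still has positive residual multiplicity in $L$, backtracking whenever no legal extension exists and pruning any branch on which the surviving multiplicities can no longer be exhausted (for instance, a length whose remaining demand exceeds the number of edges still to be placed, or a suffix whose completion would violate the divisor bound). Fixing the start vertex and quotienting by the reversal symmetry of paths roughly halves the work. In practice many of the admissible $L$ are dispatched immediately by the constructive sufficient conditions collected in the literature discussed in Section~\ref{sec:intro} (notably those guaranteeing realizability once the multiplicity of $1$ or of $2$ is large), so only a comparatively small residue requires the full search.

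The only real obstacle is computational cost: both the number of admissible multisets and the branching of the path search grow rapidly with $v$, so the $v = 19$ and especially the $v = 23$ instances demand the pruning and symmetry reductions above in order to remain feasible. No additional mathematical ingredient is needed; the content of the theorem is the certified outcome of this finite computation, carried out by Meszka~\cite{Meszka}.
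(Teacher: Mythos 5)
Your proposal matches the paper's treatment: the paper offers no proof of this theorem beyond citing it as the outcome of Meszka's finite computation \cite{Meszka}, and your description of enumerating the admissible multisets for each $v$ and certifying each by a pruned backtracking search for a realizing Hamiltonian path is exactly the kind of exhaustive verification being invoked. Your closing attribution to Meszka is the correct resolution; no further mathematical content is present in or required by the paper's version.
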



Throughout this section, fix $U = \{ x,y,z \}$ and $L = \{ x^a,y^b,z^c \}$.    Corollary~\ref{cor:multigrow} divides the problem naturally into $\pi(U) = xyz$ cases.  To facilitate discussion of this, as in~\cite{OPPS2} we write
$$(r_1, r_2, r_3) \equiv (s_1, s_2, s_3) \pmod{(t_1, t_2, t_3)}$$ 
to mean that $r_i \equiv s_i \pmod{t_i}$ for $1 \leq i \leq 3$.  We further write $$(r_1,r_2, r_3) \preceq (s_1,s_2, s_3)$$ to mean that  $r_i \leq s_i$ for   $1 \leq i \leq 3$. 

Let $a_0$ be in the range $0 < a_0 \leq x$ with $a_0 \equiv a \pmod{x}$.  Define~$a_i = a_0 + ix$ for all $i > 0$.  Make similar definitions for $b_0$ and $b_i$ with respect to~$y$ and for $c_0$ and $c_i$ with respect to~$z$.  
 
Depending which small growable realizations exist within a given case, there are various methods that we might deploy to prove the BHR Conjecture for that case.  For each method the goal is the same:  produce a finite set~$H$ of realizations for which given any  $L = \{ x^a,y^b,z^c \}$ in that case there is an $\bm{h} \in H$ that realises $\{ x^{a'},y^{b'},z^{c'} \}$ with the properties that $(a',b',c') \preceq (a,b,c)$ and if $a' \neq a$ (respectively $b \neq b'$ or $c \neq c'$) then $\bm{h}$ is $x$-growable (respectively $y$- or $z$-growable).

We describe the methods in approximate increasing order of number of realizations needed (it can only be approximate as some methods have a variable number of required realizations).   Some earlier methods are special cases of later ones,
but we separate them out as the special cases we list are both simpler and more frequently used.

Note that choosing different orderings of~$x$,~$y$ and~$z$ gives alternative ways to implement many of the methods.  The choice described corresponds to the most frequently used configuration when $x < y < z$.  For brevity and ease of reading, we denote the multiset $L = \{ x^{a_i},y^{b_j},z^{c_k} \}$ by the triple~$(a_i,b_j,c_k)$.

\begin{meth}
Find a $U$-growable realization~${\bm h_1}$ for $(a_0,b_0,c_0)$.  Then~${\bm h_1}$ covers all $(a,b,c)$ in this case with $(a,b,c) \succeq (a_0,b_0,c_0)$, which is all of them.
\end{meth}

\begin{meth}
Find a $U$-growable realization~$\bm{h_1}$ for $(a_0, b_0, c_1)$ and a $\{x,y\}$-growable realization~$\bm{h_2}$ for $(a_0,b_0,c_0)$.  The realization~$\bm{ h_1}$ covers all subcases with $c \neq c_0$ and $\bm{h_2}$ covers those with $c  = c_0$.
\end{meth}

\begin{meth}
Find a $U$-growable realization~$\bm{h_1}$ for $(a_0, b_0, c_1)$ and $\{x,y\}$-growable realizations~$\bm{h_2, h_3}$ for $(a_i,b_0,c_0)$ and $(a_0,b_j,c_0)$ respectively, where $i,j >0$.   The realization~$\bm{ h_1}$ covers all subcases with $c \neq c_0$, $\bm{ h_2}$ covers all subcases with $c=0$ and $a \geq a_i$, and $\bm{ h_3}$ covers all subcases with $c=0$ and $b \geq b_j$.  Hence we have only finitely many exceptions: those with $(a,b,c) \preceq (a_i -x , b_j -y, c_0)$.  The largest of these has $v = a_i+b_j+c_0 -x-y +1$.  In all instances we consider, these exceptions are covered by Theorem~\ref{th:small}.

If $(a_0,b_0,c_k)$ is inadmissible for all $c_k$, then we may replace $\bm{h_1}$ with $\bm{h_4, h_5}$ for $(a_1,b_0,c_1)$ and $(a_0,b_1,c_1)$ respectively.  This covers all subcases with $c \geq c_k$.
\end{meth}









\begin{meth}  Find a $U$-growable realization~$\bm{h_1}$ for $(a_0, b_0, c_k)$ and for each~$\ell$ with $0 \leq \ell < k$ find $\{x,y\}$-growable realizations~$\bm{h_{2,\ell}, h_{3,\ell}}$ for $(a_{i_\ell},b_0,c_\ell)$ and $(a_0,b_{j_\ell},c_\ell)$ respectively, where $i_\ell,j_\ell >0$.   The realization~$\bm{ h_1}$ covers all subcases with $c \geq c_k$.  Then  $\bm{ h_{2,\ell}}$ covers all subcases with $c=c_\ell$ and $a \geq a_{i_\ell}$ and $\bm{ h_{3,\ell}}$ covers all subcases with $c=c_\ell$ and $b \geq b_{j_\ell}$.   

Let $i = \max_\ell(i_\ell)$ and $j = \max_\ell(j_\ell)$.  We have only finitely many exceptions as any exception must have $(a,b,c) \preceq (a_i -x , b_j -y, c_k -z)$.  The largest of these has $v = a_i+b_j+c_k -x-y-z +1$.  In all instances we consider, these exceptions are covered by Theorem~\ref{th:small}.
\end{meth}

\begin{meth}
Find a $U$-growable realization~$\bm{h_1}$ for $(a_1, b_1, c_1)$.  Find realizations~$\bm{h_2, h_3, h_4}$ for $(a_i,b_0,c_0)$
$(a_0, b_j, c_0)$ and $(a_0, b_0, c_k)$ that are $x$-, $y$- and $z$-growable respectively, unless the subcase is always inadmissible.   Find a $\{y,z\}$-growable realization~$\bm{h_5}$ for $(a_0,b_1, c_{1})$,  find an $\{x,z\}$-growable realization~$\bm{h_6}$ for $(a_1,b_0, c_{1})$, and  find an $\{x,y\}$-growable realization~$\bm{h_7}$ for $(a_1,b_{1}, c_{0})$.

If $a = a_0$ and $b = b_0$ then either $(a,b,c) \succeq (a_0,b_0,c_k)$, and so is covered by~$\bm{h_4}$, or has $v \leq a_0 + b_0 + c_k - z + 1$. In all cases we consider,  these potential small exceptions are covered by Theorem~\ref{th:small}.  A similar argument covers the other subcases when two of $a,b,c$ are as small as possible.  Now suppose that $a = a_0$ but $b > b_0$ and $c > c_0$.  Then $(a,b,c) \succeq (a_0,b_1, c_{1})$  and so $\bm{h_5}$  suffices.  A similar argument covers the other subcases when exactly one of $a,b,c$ is as small as possible.
Finally, if none of $a,b,c$ are as small as possible, then $(a,b,c) \succeq (a_1, b_1, c_1)$ and the subcase is covered by~$\bm{h_1}$.  

\end{meth}

While more general methods may certainly be devised and will be necessary for some underlying sets, these methods are sufficient to prove the main result of this section.

\begin{thm}\label{th:245_345}
If $U = \{ 2,4,5 \}$ or $\{ 3,4,5 \}$ then $\BHR(L)$ holds for multisets~$L$ with underlying set~$U$.
\end{thm}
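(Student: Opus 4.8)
The plan is to verify $\BHR(L)$ for each of the $\pi(U) = xyz$ residue cases by applying the appropriate Method from this section, backed up by Theorem~\ref{th:small} for the finitely many small exceptions each Method leaves over. Concretely, for $U = \{2,4,5\}$ there are $40$ cases indexed by $(a_0,b_0,c_0)$ with $a_0 \in \{1,2\}$ (or $\{2\}$, depending on the convention $0 < a_0 \le x$), $b_0 \in \{1,2,3,4\}$, $c_0 \in \{1,2,3,4,5\}$; for $U=\{3,4,5\}$ there are $60$. First I would, case by case, tabulate which residue triples are admissible: the divisor condition for $v$ (applied to $d = 2$ when $5 \mid$ something, etc.) rules certain $(a_0,b_0,c_0)$ combinations inadmissible for all choices of the exponents, and Lemma~\ref{lem:inadgrow} tells us in advance which growability properties are unattainable, so that in each case we know which Method is even a candidate and which ``unless the subcase is always inadmissible'' clauses fire. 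This bookkeeping step is the one I expect to be the main obstacle — not any single computation, but organizing the $40 + 60$ cases, checking admissibility, and choosing a Method for each so that the leftover exceptions all have $v \le 19$ or $v = 23$.

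Second, for each case I would exhibit the explicit growable realizations demanded by the chosen Method. For the majority of cases I expect Method~1, 2, or 3 to suffice: find a single $U$-growable realization of $(a_0,b_0,c_0)$, or of $(a_0,b_0,c_1)$ together with an $\{x,y\}$-growable realization of $(a_0,b_0,c_0)$, and so on. Since $\max(U) = 5$ is small and the target multisets here are modest in size, such realizations can be found by direct search; I would present them as explicit Hamiltonian paths (in the style of Examples~\ref{ex:init} and~\ref{ex:grow}), list the vertices at which each is $x$-, $y$- or $z$-growable, and note that the correctness of the growth process is Theorem~\ref{th:grow} and Corollary~\ref{cor:multigrow}. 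Cases where some residue class $c_k$ is never admissible force the variant of Method~3 (replace $\bm h_1$ by $\bm{h_4},\bm{h_5}$), and a handful of stubborn cases — likely those where the smallest admissible exponent vector is itself not growable in the needed direction — will require Method~4 or Method~5 with their longer list of realizations ($\bm h_1$ through $\bm h_7$).

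Third, for each Method invoked I would record the bound on $v$ for the uncovered exceptions: Method~3 leaves $(a,b,c) \preceq (a_i - x, b_j - y, c_0)$ with $v = a_i + b_j + c_0 - x - y + 1$; Method~4 leaves $v = a_i + b_j + c_k - x - y - z + 1$; Method~5 similarly. With $x,y,z \in \{2,3,4,5\}$ and the $a_i, b_j, c_k$ coming from the small realizations actually used, I would check that every such $v$ is at most $19$ (or equals $23$), so Theorem~\ref{th:small} finishes those. If any exception slips past $v = 19$ and is not $23$, I would either choose a Method with tighter leftovers for that case or add one more growable realization to push the exception down; I do not anticipate this being necessary for $\max(U) \le 5$, but it is the contingency to watch. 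Assembling the case analysis for both $U = \{2,4,5\}$ and $U = \{3,4,5\}$, every admissible $L$ with underlying set $U$ is then realized, which is the claim.
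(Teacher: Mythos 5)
Your proposal follows essentially the same route as the paper: split into the $\pi(U)=40$ and $60$ residue cases, select one of Methods~1--5 per case, exhibit explicit growable realizations found by search, and invoke Theorem~\ref{th:small} for the bounded leftovers. The paper's proof is exactly this, with the substantive content residing in the tables of realizations (Tables~\ref{tab:245a}--\ref{tab:345d}) that your plan correctly identifies as the data to be produced.
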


\begin{proof}

For each~$U$ we give tables of realizations which, in combination with the methods described in this section and Theorem~\ref{th:small} imply the result.  Tables~\ref{tab:245a} to~\ref{tab:245d} cover $U = \{ 2,4,5 \}$ and Tables~\ref{tab:345a} to~\ref{tab:345d} cover $U = \{ 3,4,5 \}$.  

Each problem has $\pi(U)$ cases, which is~40 and~60 for $\{2,4,5\}$ and~$\{3,4,5\}$ respectively, and the tables cover these cases in lexographic order of~$(a_0, b_0, c_0)$.    Note that this means that the cases that are more difficult (and hence require more intricate methods) tend to be earlier in the tables.
\end{proof}

\begin{table}[tp]
\caption{Growable  realizations for $\{2^a, 4^b,5^c\}$ separated by congruence modulo~$(2,4,5)$: case $(1,1,1)$ to $(1,2,3)$.
They are $x$-growable at $m_x$ in $\bm{m} = (m_2,m_4,m_5)$.}\label{tab:245a}
\begin{center}
\begin{footnotesize}
$$\begin{array}{llllll}
\hline
\text{Case} & (a,b,c) & \text{Realization}  & \bm{m} & \text{Method} \\ 
\hline
(1,1,1) & (3, 5, 6) & [ 5, 7, 2, 13, 3, 8, 4, 14, 12, 1, 6, 10, 0, 11, 9 ] &  ( 10, 4, 5 ) & 5 \\
& (7, 1, 1) &  [ 1, 9, 7, 3, 5, 0, 2, 4, 6, 8 ] & ( 7,-,-) &   \\  
& ( 1, 1, 11) & [ 5, 0, 10, 1, 6, 11, 2, 7, 9, 4, 13, 8, 3, 12 ] &  (-,-, 4)  & \\  
& (1, 5, 6 ) & [ 1, 6, 2, 10, 5, 0, 9, 11, 7, 12, 3, 8, 4 ] & (-, 3, 4) & \\   
& (3, 1, 6) & [ 9, 0, 5, 3, 10, 4, 6, 1, 7, 2, 8 ] & (8, -, 4 ) & \\  
& (3, 5, 1) &  [ 0, 6, 2, 8, 4, 9, 1, 3, 7, 5 ] & (2, 5,-) & \\  
\hline
(1,1,2) & ( 3, 5, 7 ) &  [ 11, 9, 5, 10, 6, 1, 15, 13, 8, 3, 14, 2, 7, 12, 0, 4 ] & ( 13, 3, 9 ) & 5 \\  
& ( 7, 1, 2 )& [ 10, 8, 6, 4, 2, 7, 9, 0, 5, 1, 3 ] & ( 7,-,-) \\  
& ( 1, 9, 2 ) &  [ 7, 3, 12, 8, 0, 4, 9, 11, 2, 6, 10, 1, 5 ] & ( -, 3, -) \\
& (  1, 5, 7 ) &  [ 6, 11, 1, 10, 0, 5, 7, 2, 12, 8, 3, 13, 4, 9 ] & (-,5, 6 ) \\
& (  3,1, 7 ) &  [ 11, 4, 6, 1, 9, 2, 7, 0, 5, 3, 10, 8 ] & ( 8,-, 4) \\  
& (  3,  5, 2 ) &  [ 10, 1, 6, 0, 7, 3, 5, 9, 2, 4, 8 ] & (2, 6,-) \\   
\hline
(1,1,3) & (3, 5, 8 ) &  [ 11, 15, 10, 6, 4, 16, 3, 8, 13, 1, 14, 9, 5, 7, 12, 0, 2 ] & ( 4, 8, 11) & 5 \\  
&(  5,1, 3 )& [ 1, 9, 4, 0, 2, 7, 5, 3, 8, 6 ] & ( 6,-,-) & \\ 
&(  1, 5, 3 )& [ 8, 3, 7, 2, 6, 0, 4, 9, 1, 5 ] & (-, 3,-) \\  
&( 1, 1, 8 )& [ 2, 7, 1, 6, 0, 5, 9, 4, 10, 8, 3 ] & (-,-,4) \\  
&( 1,  5, 8 )& [ 6, 11, 0, 10, 5, 1, 3, 13, 2, 7, 12, 8, 4, 14, 9 ] & (-, 7, 9) \\   
&( 3, 1, 8 )& [ 12, 4, 6, 1, 10, 2, 7, 5, 0, 8, 3, 11, 9 ] & ( 9,-, 4) \\ 
&( 3, 5, 3 )&  [ 3, 7, 11, 1, 6, 2, 0, 8, 10, 5, 9, 4 ] & (1, 4,-) \\ 
\hline
(1,1,4) & ( 5, 1, 4 ) & [ 4, 6, 1, 10, 3, 5, 0, 9, 7, 2, 8 ] & (8, 3, 4) & 4 \\   
&( 1, 1, 9 )& [ 6, 11, 1, 8, 3, 10, 5, 0, 4, 9, 2, 7 ] & (-, 5, 6) \\
&( 1, 5, 4 )&  [ 1, 6, 0, 5, 9, 2, 7, 3, 10, 8, 4 ] & ( -, 3, 4 ) \\   
&(  3, 1, 9 )& [ 7, 12, 3, 8, 13, 1, 10, 5, 0, 2, 6, 11, 9, 4 ] & ( -, 5, 7 ) \\ 
&( 3, 5, 4 )&  [ 6, 11, 2, 7, 3, 1, 5, 9, 4, 0, 8, 10, 12 ] & ( -, 3, 6) \\   
\hline
(1,1,5) & (5, 1, 5) &  [ 9, 2, 7, 5, 0, 10, 8, 3, 11, 1, 6, 4 ] & ( 9, 3, 4 ) & 4 \\  
&( 1, 1, 10)& [ 4, 9, 1, 6, 11, 3, 5, 0, 8, 12, 7, 2, 10 ] & (-3, 4) \\  
&(1, 5, 5 )& [ 1, 5, 0, 8, 10, 6, 11, 3, 7, 2, 9, 4 ], [ 4, 5 ] & (-,3, 4) \\ 
&(3, 1, 10 )& [ 0, 5, 10, 12, 7, 2, 13, 8, 3, 1, 11, 6, 4, 14, 9 ] & (-, 5, 9) \\  
&( 3, 5, 5 )&  [ 6, 8, 3, 1, 11, 7, 2, 12, 10, 5, 0, 4, 9, 13 ] & (-, 5, 6) \\  
\hline
(1,2,1) &  ( 3, 6, 6 ) & [ 5, 0, 12, 14, 9, 4, 2, 6, 1, 13, 8, 10, 15, 11, 7, 3 ] & (1, 10, 4) & 5 \\  
&( 7,  2, 1 )& [ 1, 10, 8, 6, 2, 4, 0, 9, 3, 5, 7 ] &  (6,-,-) \\  
&( 1, 10, 1 )& [ 0, 4, 9, 5, 1, 10, 8, 12, 3, 7, 11, 2, 6 ] & (-, 3, -) \\   
&( 1, 6, 6 )& [ 10, 6, 1, 11, 7, 3, 12, 2, 0, 5, 9, 4, 13, 8 ] & (-, 6, 8) \\  
&( 3, 2, 6 )& [ 10, 5, 0, 8, 1, 3, 7, 2, 4, 9, 11, 6 ] & ( 1,-, 6) \\  
&( 3, 6, 1 )& [ 6, 10, 3, 7, 9, 0, 4, 2, 8, 1, 5 ] & ( 7, 3, -)  \\
\hline
(1,2,2) & ( 3, 6, 7 ) &  [ 9, 14, 12, 0, 5, 1, 13, 11, 16, 3, 7, 2, 15, 10, 6, 8, 4 ] &( 10, 3, 4) & 5 \\
&( 5,  2, 2)&  [ 9, 4, 6, 8, 0, 2, 7, 1, 5, 3 ] & (1,-,-) \\  
&( 1, 6, 2 )&  [ 2, 6, 0, 5, 9, 1, 7, 3, 8, 4 ] & (-,5,-) \\  
&( 1, 2, 7 )& [ 4, 9, 3, 7, 1, 6, 2, 8, 10, 5, 0 ] &  (-,-, 4) \\   
&( 1, 6, 7 )& [ 5, 0, 11, 7, 2, 12, 8, 3, 1, 6, 10, 14, 4, 9, 13 ] & (-, 9, 4) \\ 
&( 3,  2, 7 )& [ 1, 6, 11, 0, 9, 4, 2, 10, 5, 7, 12, 8, 3 ]&  (2,-, 7) \\  
&( 3,  6, 2 )& [ 6, 11, 3, 7, 9, 1, 5, 0, 8, 10, 2, 4 ] & (7, 3,-) \\ 
\hline
(1,2,3) & (  5, 2, 3 ) & [ 7, 9, 3, 5, 0, 2, 6, 1, 10, 8, 4 ] & ( 8, 3, 4) & 4 \\  
&( 1, 2, 8 )& [ 3, 7, 2, 9, 4, 11, 6, 1, 8, 0, 10, 5 ] & (-,4, 5) \\ 
&( 1, 6, 3 )& [ 4, 8, 1, 6, 10, 3, 7, 2, 9, 0, 5 ] & (-, 3, 4 ) \\
&( 3, 2, 8 )& [ 6, 11, 13, 4, 8, 3, 1, 10, 5, 0, 9, 7, 2, 12 ] & (-, 5, 6 ) \\ 
&( 3, 6, 3 )& [ 7, 11, 0, 8, 3, 12, 1, 5, 9, 4, 2, 6, 10 ] & (-, 4, 7 ) \\ 
\hline
\end{array}$$
\end{footnotesize}
\end{center}
\end{table}

\begin{table}[tp]
\caption{Growable  realizations for $\{2^a, 4^b,5^c\}$ separated by congruence modulo~$(2,4,5)$: case $(1,2,4)$ to $(1,4,4)$.
They are $x$-growable at $m_x$ in $\bm{m} = (m_2,m_4,m_5)$.}\label{tab:245b}
\begin{center}
\begin{footnotesize}
$$\begin{array}{llllll}
\hline
\text{Case} & (a,b,c) & \text{Realization}  & \bm{m} & \text{Method} \\ 
\hline
(1,2,4) &  (5, 2, 4 ) &  [ 8, 10, 3, 7, 2, 0, 5, 9, 11, 1, 6, 4 ] & ( 9, 3, 4 ) & 4 \\   
&( 1, 2, 9 ) & [ 1, 6, 11, 2, 10, 5, 3, 8, 0, 9, 4, 12, 7 ] & (-, 6, 7) \\ 
&( 1, 6, 4 ) &  [ 6, 10, 3, 8, 0, 4, 9, 5, 1, 11, 7, 2 ] & ( -, 3, 6)   \\  
&( 3, 2, 9 )&  [ 8, 13, 3, 14, 12, 7, 2, 4, 9, 11, 1, 6, 10, 5, 0 ] & (-, 7, 8) \\  
&(  3, 6, 4 )& [ 0, 12, 2, 7, 11, 1, 10, 6, 4, 9, 5, 3, 13, 8 ] & ( -,7, 8) \\ 
\hline
(1,2,5) & ( 3, 2, 5 ) & [ 5, 0, 7, 2, 9, 3, 8, 10, 1, 6, 4 ] & (8, 3, 4) & 3 \\ 
&( 1, 2, 10 )& [ 13, 9, 0, 5, 10, 1, 6, 11, 2, 4, 8, 3, 12, 7 ] & (-, 3, 7 )  \\   
&(  1, 6, 5 )&  [ 6, 10, 1, 5, 7, 2, 11, 3, 8, 12, 4, 9, 0 ] & ( -, 4, 6) \\  
\hline
(1,3,1) & ( 3, 7, 6 ) &  [ 10, 15, 11, 6, 2, 7, 12, 8, 4, 0, 13, 1, 3, 5, 9, 14, 16 ] & ( 1, 7, 10 ) & 5 \\  
&( 5,  3, 1 ) & [ 2, 6, 4, 8, 0, 5, 7, 3, 1, 9 ] & (-,-, 2) \\  
&( 1,  7, 1 )&  [ 5, 1, 7, 3, 9, 4, 0, 8, 2, 6 ] & (-,3,-) \\  
&( 1,  3, 6 )& [ 1, 6, 10, 4, 9, 5, 3, 8, 2, 7, 0 ] & (-,-, 5) \\   
&( 1, 7, 6 )& [ 8, 12, 7, 3, 14, 4, 9, 5, 0, 10, 6, 2, 13, 11, 1 ] &(-, 5, 8) \\ 
&( 3,  3, 6 )& [ 5, 0, 8, 6, 1, 10, 12, 4, 9, 11, 2, 7, 3 ] & ( 9,-, 4 ) \\  
&( 3, 7, 1 )& [ 9, 7, 3, 11, 1, 5, 0, 8, 4, 6, 2, 10 ] & ( 8, 3,- ) \\ 
\hline
(1,3,2) & ( 3, 7, 7 ) & [ 11, 6, 2, 16, 0, 5, 9, 4, 17, 3, 7, 12, 10, 14, 1, 15, 13, 8 ] & (14, 5, 8) & 5 \\  
&( 5,  3, 2 ) &[ 1, 3, 8, 10, 5, 7, 0, 9, 2, 6, 4 ] & ( 2,-,-) \\   
&( 1, 7, 2 )&  [ 1, 10, 6, 2, 8, 4, 0, 7, 3, 9, 5 ] & (-,5,-) \\   
&( 1,  3, 7)& [ 10, 5, 0, 8, 3, 1, 6, 2, 9, 4, 11, 7 ] & ( -,-,4) \\  
&( 1,  7, 7 )& [ 0, 4, 8, 3, 5, 10, 15, 11, 6, 2, 14, 9, 13, 1, 12, 7 ] & (-, 5, 7) \\  
&( 3,  3, 7 )& [ 4, 6, 1, 11, 2, 7, 5, 0, 10, 12, 3, 8, 13, 9 ] & (9,-, 4) \\  
&( 3,  7, 2 )& [ 10, 12, 1, 6, 2, 11, 3, 7, 5, 9, 0, 4, 8 ] & (10, 3,-) \\ 
\hline
(1,3,3) & ( 1, 11, 3 ) &  [ 7, 3, 15, 11, 0, 5, 9, 13, 1, 6, 2, 14, 10, 12, 8, 4 ] & (9, 3, 4) & 4 \\  
&( 1, 3, 8 )& [ 10, 2, 6, 1, 9, 11, 3, 7, 12, 4, 8, 0, 5 ] & ( 8, -, 4 ) \\
&(5, 3, 3 )& [ 5, 10, 0, 2, 7, 3, 1, 9, 11, 6, 4, 8 ] & ( 1,-, 6 ) \\ 
&( 1, 7, 3 )& [ 7, 3, 11, 4, 8, 10, 2, 6, 1, 9, 5, 0 ] & ( 9,-, 4 ) \\ 
\hline
(1,3,4) & ( 3, 3, 4 ) &  [ 4, 8, 10, 3, 5, 0, 9, 2, 7, 1, 6 ] & ( 8, 3, 4 ) & 3 \\
&( 1, 3, 9 )& [ 5, 7, 2, 12, 3, 8, 13, 4, 9, 0, 10, 1, 11, 6 ] &(-, 5, 6) \\ 
&( 1, 7, 4 )& [ 7, 12, 4, 8, 3, 11, 2, 0, 9, 5, 1, 10, 6 ] & (-, 6, 7 ) \\
\hline
(1,3,5) & (3, 3, 5 ) &  [ 7, 2, 10, 3, 8, 0, 5, 9, 11, 1, 6, 4 ] & ( 9, 3, 4  ) & 3 \\  
&(1, 3, 10 )& [ 0, 10, 5, 1, 11, 6, 4, 9, 14, 3, 8, 13, 2, 12, 7 ] & (-,6, 7)  \\  
&( 1, 7, 5 )& [ 4, 9, 13, 8, 10, 0, 5, 1, 11, 6, 2, 12, 3, 7 ]  & ( -, 3, 4 ) \\  
\hline
(1,4,1) & ( 1, 4, 11 )& [ 5, 9, 4, 16, 11, 7, 2, 14, 12, 0, 13, 1, 6, 10, 15, 3, 8 ] &( 11, 4, 5 ) & 4 \\ 
&(1, 8, 1 )& [ 6, 2, 9, 0, 7, 3, 10, 4, 8, 1, 5 ] & ( 8, 4, -)  \\   
&( 5, 4, 1 )& [ 4, 6, 8, 1, 10, 3, 7, 2, 0, 9, 5 ] & ( 3, 5, - )  \\  
&( 1, 4, 6 )& [ 5, 10, 2, 4, 9, 1, 6, 11, 7, 0, 8, 3 ] & ( 2, 6, -)  \\ 
\hline
(1,4,2) & (1, 8, 2 ) &  [ 5, 9, 1, 6, 2, 10, 0, 8, 4, 11, 3, 7 ] & ( 9, 4, 5 ) & 3 \\ 
&( 1, 4, 7 )&  [ 5, 0, 9, 4, 12, 8, 6, 1, 10, 2, 7, 3, 11 ] & ( 9,-, 4) \\  
&( 5, 4, 2 )& [ 4, 8, 3, 5, 7, 11, 9, 1, 6, 2, 0, 10 ] & ( 1,-, 5)  \\   
\hline
(1,4,3) & ( 3, 4, 3 ) &  [ 4, 8, 2, 6, 1, 10, 3, 7, 9, 0, 5 ] & (  8, 3, 4) & 3 \\   
&( 1, 4, 8 )& [ 7, 11, 1, 6, 10, 5, 0, 9, 4, 2, 12, 3, 8, 13 ] & (-,3, 7) \\   
&(  1, 8, 3)& [ 6, 10, 5, 1, 9, 11, 2, 7, 3, 12, 8, 4, 0 ] & (-,3, 6) \\  
\hline
(1,4,4) &  (1, 4, 9 ) &  [ 4, 6, 1, 11, 7, 2, 12, 8, 3, 13, 9, 14, 10, 0, 5 ] & ( 8, 3, 4 ) & 2 \\  
&( 1, 4, 4 ) &  [ 5, 1, 6, 2, 7, 3, 8, 0, 4, 9 ] & ( 1, 3, -) \\  
\hline

\hline
\end{array}$$
\end{footnotesize}
\end{center}
\end{table}

\begin{table}[tp]
\caption{Growable  realizations for $\{2^a, 4^b,5^c\}$ separated by congruence modulo~$(2,4,5)$: case $(1,4,5)$ to $(2,3,2)$.
They are $x$-growable at $m_x$ in $\bm{m} = (m_2,m_4,m_5)$.}\label{tab:245c}
\begin{center}
\begin{footnotesize}
$$\begin{array}{llllll}
\hline
\text{Case} & (a,b,c) & \text{Realization}  & \bm{m} & \text{Method} \\ 
\hline
(1,4,5) & ( 1, 4, 10 )&  [ 7, 2, 13, 9, 4, 15, 11, 0, 12, 1, 6, 8, 3, 14, 10, 5 ] & (10, 4, 5 ) & 2 \\ 
&( 1, 4, 5 ) & [ 3, 7, 2, 8, 1, 5, 10, 6, 0, 9, 4 ] & ( 2, 4,-) \\  
\hline
(2,1,1) & ( 4, 5, 6 ) &  [ 9, 13, 1, 12, 7, 2, 14, 0, 4, 15, 3, 5, 10, 8, 6, 11 ] & (13, 8, 9 ) & 5 \\ 
&( 8,  1, 1) &  [ 10, 1, 8, 6, 4, 2, 0, 9, 3, 5, 7 ] & (3,-,-) \\   
&( 2,  9, 1 )& [ 7, 3, 12, 8, 4, 0, 2, 6, 11, 9, 5, 1, 10 ] & (-, 3,-) \\   
&( 2,  5, 6 )& [ 0, 5, 1, 10, 12, 8, 3, 13, 9, 4, 2, 7, 11, 6 ] & ( -, 9, 4 ) \\  
&( 4,  1, 6 )& [ 3, 7, 2, 9, 4, 11, 1, 6, 8, 10, 0, 5 ] & (9,-, 4 ) \\
&(  4,  5, 1 )&  [ 3, 7, 0, 2, 6, 8, 10, 1, 5, 9, 4 ] & ( 1, 4,-)  \\
\hline
(2,1,2) & ( 2, 1, 12) &  [ 4, 9, 14, 0, 11, 15, 10, 5, 3, 8, 13, 2, 7, 12, 1, 6 ] & ( 2, 10, 5 ) & 4 \\ 
&( 2, 5, 2 )& [ 5, 1, 7, 3, 8, 6, 2, 0, 4, 9 ] &( 7, 3,-) \\   
&(8, 1, 2 )& [ 7, 9, 11, 1, 6, 8, 4, 2, 0, 10, 5, 3 ] & ( 1, 7,-)  \\  
&(2, 5, 7 )& [ 7, 9, 14, 3, 13, 8, 4, 2, 12, 1, 5, 10, 0, 11, 6 ] & ( 5, 7,-) \\   
&(4, 1, 7 )& [ 6, 8, 3, 11, 0, 2, 10, 1, 9, 4, 12, 7, 5 ]  & (  4, 6,-)  \\   
\hline
(2,1,3) & (2, 5, 3 ) &  [ 4, 8, 10, 3, 7, 1, 6, 2, 9, 0, 5 ]  & (8, 3, 4 ) & 3 \\
&(2, 1, 8 ) & [ 11, 4, 6, 1, 8, 3, 10, 0, 5, 9, 2, 7 ]  & (9,-, 4) \\ 
&( 6, 1, 3 )& [ 3, 8, 6, 4, 2, 7, 9, 0, 5, 1, 10 ]  & ( 1,-, 4) \\  
\hline
(2,1,4) & (2, 5, 4 ) &  [ 6, 1, 9, 11, 7, 2, 10, 3, 5, 0, 8, 4 ]  & (8, 3, 4) & 4 \\ 
&( 2, 1, 9 )& [ 3, 8, 12, 7, 2, 4, 9, 1, 6, 11, 0, 5, 10 ] & (1,-, 4) \\ 
&( 6, 1, 4 )& [ 11, 1, 6, 2, 4, 9, 7, 5, 0, 10, 8, 3 ]  & (1,-, 4) \\  
\hline
(2,1,5) & (2, 5, 5 ) &  [ 4, 8, 3, 12, 10, 6, 1, 9, 5, 0, 11, 2, 7 ]  & (10, 3, 4) & 3 \\  
&( 2, 1, 10 )& [ 5, 10, 1, 6, 11, 2, 4, 9, 0, 12, 7, 3, 8, 13 ] & ( 2,-, 7) \\   
&(  4, 1, 5 )& [ 8, 2, 7, 9, 3, 5, 0, 4, 6, 1, 10 ]  & ( 8,-, 4) \\  
\hline
(2,2,1) & (2, 2, 11) & [ 7, 2, 13, 8, 3, 5, 10, 15, 11, 6, 1, 12, 0, 14, 9, 4 ] &( 2, 11, 5) & 4 \\ 
&( 2, 6, 1 )& [ 6, 2, 0, 8, 4, 9, 5, 1, 7, 3 ] &(1, 4,-) \\ 
&( 6, 2, 1 )& [ 5, 7, 3, 1, 9, 4, 6, 2, 0, 8 ]  &(1, 5,-) \\ 
&(2, 2, 6 )& [ 2, 7, 1, 3, 8, 10, 6, 0, 5, 9, 4 ] &( 1, 5,-) \\   
\hline
(2,2,2) & ( 2, 6, 2) & [ 6, 2, 9, 3, 7, 0, 5, 1, 10, 8, 4 ]  &( 8, 3, 4) & 3\\ 
&(  2, 2, 7 )&  [ 4, 9, 1, 3, 8, 10, 5, 0, 7, 2, 6, 11 ]  &( 1,-, 6) \\ 
&( 8, 2, 2 ) & [ 9, 11, 0, 5, 1, 12, 10, 8, 6, 4, 2, 7, 3 ] &(1,- 4) \\  
\hline
(2,2,3) & (2, 6, 3 ) & [ 6, 1, 8, 4, 0, 10, 2, 7, 3, 11, 9, 5 ]  &(9, 3, 5 ) & 3 \\
&( 2, 2, 8 ) & [ 7, 2, 11, 3, 8, 6, 1, 9, 4, 0, 5, 10, 12 ] & ( 10,-, 4) \\
&( 6, 2, 3 ) & [ 3, 5, 0, 10, 6, 1, 11, 9, 7, 2, 4, 8 ] &( 1,-, 4) \\  
\hline
(2,2,4) & (2, 6, 4 ) &  [ 8, 3, 12, 4, 9, 0, 11, 2, 7, 5, 1, 10, 6 ] & ( 10, 4, 6 ) & 3 \\  
&( 2, 2, 9 ) & [ 3, 8, 12, 7, 2, 4, 9, 13, 11, 6, 1, 10, 5, 0 ] &( 1,-, 4) \\ 
&( 4, 2, 4 )& [ 3, 8, 10, 1, 5, 0, 6, 2, 4, 9, 7 ] &(1,-, 4 ) \\  
\hline
(2,2,5) & ( 2, 2, 10 ) &  [ 6, 11, 1, 12, 2, 7, 9, 4, 14, 10, 5, 0, 13, 3, 8 ] &( 11, 5, 6) & 3 \\  
&( 2, 6, 5 ) & [ 12, 0, 5, 1, 10, 6, 8, 4, 9, 13, 3, 7, 2, 11 ]  &( 10, 3,-) \\  
&( 4, 2, 5) & [ 0, 10, 3, 8, 1, 5, 7, 9, 2, 6, 4, 11 ] &( 8, 4, -) \\ 
\hline
(2,3,1) & ( 2, 3, 11 ) &  [ 4, 8, 3, 15, 10, 5, 0, 13, 1, 6, 11, 16, 12, 14, 2, 7, 9 ] &(11, 3, 4) & 4 \\ 
&( 2, 7, 1 ) & [ 5, 9, 2, 4, 8, 1, 10, 3, 7, 0, 6 ] & ( 3, 6,-) \\ 
&( 6, 3, 1 ) & [ 10, 1, 6, 8, 4, 2, 0, 9, 5, 3, 7 ] & (1, 6,-) \\  
&(  2, 3, 6 ) &  [ 11, 4, 6, 10, 3, 5, 1, 8, 0, 7, 2, 9 ] & (7, 4,-) \\  
\hline
(2,3,2) & ( 2, 3, 12 ) &  [ 3, 8, 13, 17, 12, 7, 2, 4, 9, 14, 10, 15, 1, 6, 11, 16, 0, 5 ] & ( 1, 9, 4) & 4 \\  
&( 2, 11, 2 ) &  [ 6, 2, 13, 1, 5, 9, 7, 11, 15, 3, 8, 4, 0, 12, 14, 10 ] & ( 11, 3,-) \\  
&( 4, 3, 2 ) &  [ 0, 5, 7, 3, 1, 9, 4, 8, 6, 2 ] & (2, 5,-) \\ 
&(  2, 3, 7 ) &  [ 6, 1, 9, 0, 8, 4, 12, 10, 2, 7, 11, 3, 5 ]  &(8, 4,-) \\  
\hline
\end{array}$$
\end{footnotesize}
\end{center}
\end{table}

\begin{table}[tp]
\caption{Growable  realizations for $\{2^a, 4^b,5^c\}$ separated by congruence modulo~$(2,4,5)$: case $(2,3,3)$ to $(2,4,5)$.
They are $x$-growable at $m_x$ in $\bm{m} = (m_2,m_4,m_5)$.}\label{tab:245d}
\begin{center}
\begin{footnotesize}
$$\begin{array}{llllll}
\hline
\text{Case} & (a,b,c) & \text{Realization}  & \bm{m} & \text{Method} \\ 
\hline
(2,3,3) & ( 2, 3, 8 ) &  [ 6, 10, 1, 11, 13, 4, 9, 0, 5, 7, 2, 12, 3, 8 ] & (10, 5, 6) & 3 \\  
&( 2, 7, 3 ) & [ 6, 8, 4, 12, 3, 7, 11, 2, 0, 9, 1, 10, 5 ] & (4, 6,-) \\  
&( 4, 3, 3 ) & [ 1, 6, 10, 8, 3, 5, 9, 4, 2, 0, 7 ] & (1, 6,-) \\  
\hline
(2,3,4) & (2, 3, 9 ) &  [ 7, 2, 12, 14, 10, 5, 0, 11, 1, 6, 8, 3, 13, 9, 4 ] &(10, 3, 4) & 2 \\
&(2, 3, 4 ) & [ 1, 6, 2, 0, 5, 7, 3, 8, 4, 9 ] &(3, 5,-) \\  
\hline
(2,3,5) & (2, 3, 10) &  [ 6, 11, 15, 4, 9, 7, 2, 13, 1, 12, 8, 3, 14, 0, 5, 10 ] & (12, 5, 6) & 2 \\ 
&( 2, 3, 5 )&  [ 6, 0, 2, 8, 4, 10, 1, 7, 3, 9, 5 ]  &(4, 6,-) \\ 
\hline
(2,4,1) & ( 2, 4, 11 ) &  [ 6, 11, 16, 12, 7, 2, 15, 13, 0, 14, 1, 5, 10, 8, 3, 17, 4, 9 ]  &( 12, 4, 6) & 4 \\  
&( 2, 8, 1 ) &  [ 3, 7, 11, 9, 1, 5, 10, 6, 2, 0, 8, 4 ]  &( 1, 4,-) \\  
&( 4, 4, 1 )&  [ 5, 7, 3, 9, 1, 6, 2, 0, 8, 4 ]  &( 3, 5,-) \\ 
&(  2, 4, 6 )& [ 12, 4, 6, 2, 10, 1, 9, 0, 8, 3, 11, 7, 5 ]  &(  8, 5,-) \\   
\hline
(2,4,2) &  (2, 4, 7) &  [ 4, 8, 13, 3, 7, 12, 10, 1, 6, 2, 11, 9, 0, 5 ]  &(8, 3, 4) & 3 \\  
&( 2, 8, 2 )& [ 11, 3, 7, 5, 9, 0, 4, 8, 12, 10, 1, 6, 2 ]  &(9, 3,-)  \\  
&( 4, 4, 2 )& [ 8, 10, 3, 7, 1, 5, 0, 9, 2, 4, 6 ]  &(8, 3,-) \\ 
\hline
(2,4,3) & (4, 4, 3) &  [ 11, 1, 3, 7, 2, 4, 9, 5, 0, 8, 10, 6 ]  &(1, 5, 6) & 2 \\ 
&(2, 4, 3 ) &  [ 3, 7, 2, 6, 1, 9, 5, 0, 8, 4 ]  &( 3, 5,-) \\   
\hline
(2,4,4) & ( 2, 4, 9 ) & [ 10, 5, 0, 4, 8, 13, 15, 3, 14, 2, 7, 12, 1, 6, 11, 9 ] &(13, 7, 9 ) & 2 \\  
&(2,  4, 4 ) & [ 9, 3, 5, 10, 6, 1, 8, 2, 4, 0, 7 ] & (6, 3,-) \\  
\hline
(2,4,5) &  (2, 4, 5 ) & [ 5, 0, 8, 3, 10, 2, 7, 11, 9, 1, 6, 4 ] &( 8, 3, 4) & 1 \\ 
\hline
\end{array}$$
\end{footnotesize}
\end{center}
\end{table}

\begin{table}[tp]
\caption{Growable  realizations for $\{3^a, 4^b,5^c\}$ separated by congruence modulo~$(3,4,5)$: case $(1,1,1)$ to $(1,3,2)$.
They are $x$-growable at $m_x$ in $\bm{m} = (m_3,m_4,m_5)$.}\label{tab:345a}
\begin{center}
\begin{footnotesize}
$$\begin{array}{llllll}
\hline
\text{Case} & (a,b,c) & \text{Realization}  & \bm{m} & \text{Method} \\ 
\hline
(1,1,1) & (4,5,6) & [ 5, 10, 14, 2, 7, 4, 15, 11, 8, 3, 0, 12, 1, 6, 9, 13 ] &  ( 10, 4, 5 ) & 5 \\
&(1, 1, 11) &  [ 12, 7, 2, 11, 6, 1, 10, 5, 0, 3, 8, 13, 9, 4 ] & (-, -, 4 ) \\
&( 7, 1, 1 ) &  [ 2, 5, 9, 6, 3, 8, 1, 4, 7, 0 ] &  (3,-,-) \\
&( 1, 5, 6 ) & [ 6, 11, 7, 2, 10, 1, 5, 9, 4, 12, 8, 3, 0 ] &  (-, 4, 6 ) \\
&( 4, 1, 6 ) &  [ 9, 0, 5, 2, 10, 7, 4, 11, 6, 1, 8, 3 ] & (8,-, 4) \\
&( 4, 5, 1 ) &  [ 5, 9, 6, 2, 10, 3, 8, 1, 4, 7, 0 ] & (3, 5,-) \\
\hline
(1,1,2) & (1, 9, 2 ) &  [ 5, 9, 1, 10, 6, 2, 11, 7, 3, 0, 4, 12, 8 ] & (7, 8,-) & 3 \\
&(7, 1, 2 ) &  [ 4, 7, 10, 5, 2, 9, 1, 6, 3, 0, 8 ] &  (2, 4,- ) \\
&(4, 1, 7 ) &  [ 4, 9, 1, 6, 3, 12, 7, 2, 10, 0, 5, 8, 11 ] & (9, 3, 4 ) \\
&(1, 5, 7 ) & [ 4, 8, 13, 9, 0, 5, 2, 12, 3, 7, 11, 6, 1, 10 ] & ( 8, 3, 4 ) \\
\hline
(1,1,3) & ( 10, 1, 3 ) & [ 6, 9, 12, 0, 3, 8, 11, 14, 2, 7, 4, 1, 5, 10, 13 ] & (10, 4, 6 ) & 4 \\
&(1, 1, 8 ) &  [ 5, 10, 4, 9, 1, 6, 0, 7, 2, 8, 3 ] & (-, 4, 5 ) \\
&( 1, 9, 3 ) &  [ 6, 10, 13, 3, 7, 2, 11, 1, 5, 9, 0, 4, 8, 12 ] &  ( -, 3, 6)  \\
&( 4, 1, 8 ) & [ 12, 3, 6, 11, 1, 4, 9, 0, 5, 10, 7, 2, 13, 8 ] & (-, 3, 8) \\ 
&( 4, 5, 3 ) & [ 3, 8, 4, 0, 10, 5, 1, 11, 2, 6, 9, 12, 7 ] & (-, 3, 7 )   \\
&( 7, 1, 3 ) &  [ 10, 1, 4, 9, 0, 3, 7, 2, 5, 8, 11, 6 ] & (-,  5, 6)  \\
\hline
(1,1,4) & ( 7, 1, 4 ) & [ 11, 1, 6, 9, 12, 8, 3, 0, 5, 2, 10, 7, 4 ] & ( 8, 3, 4 ) & 4 \\
&( 1, 1, 9 ) &  [ 6, 11, 7, 2, 9, 4, 1, 8, 3, 10, 5, 0 ] & (-, 3, 6)\\
&( 1, 5, 4 ) &  [ 6, 1, 8, 0, 4, 9, 2, 7, 3, 10, 5 ] & (-, 3, 5)  \\
&( 4, 1, 9 ) & [ 4, 9, 12, 7, 2, 14, 10, 5, 0, 3, 8, 13, 1, 11, 6 ] & (-,5, 6) \\
&( 4, 5, 4 ) & [ 6, 10, 5, 1, 4, 9, 13, 2, 7, 11, 0, 3, 12, 8 ] & (-, 7, 8 ) \\
\hline
(1,1,5) & ( 4, 1, 5 ) &  [ 5, 10, 7, 2, 8, 0, 4, 9, 1, 6, 3 ] & (2, 3, 5) & 3 \\
&( 1, 1, 10 ) & [ 5, 0, 10, 2, 7, 12, 8, 3, 11, 6, 1, 9, 4 ] & (-, 3, 4 ) \\
&( 1, 5, 5 ) &  [ 5, 9, 4, 1, 6, 2, 10, 3, 8, 0, 7, 11 ] & (-,4, 5) \\   
\hline
(1,2,1) & ( 4, 6, 6 ) & [ 5, 10, 6, 2, 15, 1, 14, 11, 16, 4, 9, 12, 7, 3, 0, 13, 8 ] & ( 4, 5, 8 ) & 5 \\
&( 7, 2, 1 ) &  [ 7, 10, 2, 8, 5, 1, 4, 0, 3, 6, 9 ] & ( 7,-,-)  & \\
&( 1, 10, 1) &  [ 4, 8, 12, 7, 3, 0, 9, 5, 1, 10, 6, 2, 11 ] & (-, 4,-)  \\  
&( 1, 6, 6 ) &  [ 6, 10, 0, 5, 9, 13, 4, 8, 3, 12, 1, 11, 2, 7 ] & (-, 5, 6) \\ 
&( 4, 2, 6 )  &  [ 5, 8, 3, 12, 4, 7, 2, 10, 0, 9, 1, 11, 6 ] &  (4,-, 6 )  \\ 
&( 4, 6, 1 )  &  [ 10, 6, 3, 0, 4, 8, 5, 1, 9, 2, 11, 7 ] & (5, 7,- ) \\ 
\hline
(1,2,2) & ( 1, 2, 7 ) & [ 4, 9, 2, 7, 1, 6, 10, 5, 0, 8, 3 ] & ( 2, 3, 4 ) & 3 \\ 
&( 1, 6, 2 ) &  [ 8, 2, 6, 1, 5, 9, 4, 0, 7, 3 ] & ( 2, 3, - ) \\  
&( 7, 2, 2 ) &  [ 5, 2, 11, 3, 8, 0, 9, 6, 1, 10, 7, 4 ] & (3, 4, -) \\   
\hline
(1,2,3) & ( 1, 2, 8 ) &  [ 5, 10, 1, 6, 11, 3, 8, 0, 7, 2, 9, 4 ] & ( 3, 4, 5 ) & 3 \\ 
&( 1, 6, 3 ) &  [ 8, 1, 5, 9, 4, 0, 6, 10, 2, 7, 3 ] & (  2, 3 ,- ) \\ 
&( 4, 2, 3 ) &  [ 8, 5, 0, 3, 7, 4, 9, 2, 6, 1 ] & ( 4, 5,-) \\   
\hline
(1,2,4) & ( 1, 2, 9 ) &  [ 4, 9, 12, 7, 2, 11, 6, 1, 10, 5, 0, 8, 3 ] & ( 2, 3, 4 ) & 3 \\   
&( 1, 6, 4 ) & [ 4, 8, 0, 5, 2, 9, 1, 6, 10, 3, 7, 11 ] & ( 8, 3, -) \\   
&( 4, 2, 4) &  [ 3, 8, 0, 5, 9, 1, 6, 10, 2, 7, 4 ] &  (2, 3, -) \\
\hline
(1,2,5) & ( 4, 2, 5 ) &  [ 6, 9, 4, 11, 2, 7, 3, 0, 8, 1, 10, 5 ] & ( 4, 5, 6) & 3 \\  
&( 1, 2, 10 )&  [ 8, 11, 2, 7, 12, 3, 13, 4, 9, 0, 5, 10, 6, 1 ] & ( -, 7, 8) \\ 
&( 1, 6, 5 ) &  [ 12, 3, 8, 4, 0, 5, 9, 1, 11, 7, 2, 10, 6 ] & (-, 5, 6) \\  
\hline
(1,3,1) & ( 1, 3, 6) & [ 5, 10, 6, 1, 7, 2, 9, 4, 0, 8, 3 ] & (2, 3, 5) & 3 \\   
& ( 1, 7, 1) & [ 5, 9, 4, 8, 2, 6, 0, 3, 7, 1 ] & (4, 5, -) \\ 
& ( 7, 3, 1) &  [ 4, 7, 10, 1, 5, 9, 0, 3, 8, 11, 2, 6 ] & ( 2, 4, -) \\  
\hline
(1,3,2) & ( 1, 3, 7 ) &  [ 5, 10, 2, 7, 0, 4, 9, 1, 6, 11, 8, 3 ] & (  2, 3, 5 ) & 3 \\ 
&( 1, 7, 2 ) &  [ 4, 9, 2, 6, 10, 5, 1, 8, 0, 7, 3 ] & (  3, 4,-) \\ 
&( 7, 1, 2) &  [ 3, 6, 9, 1, 4, 0, 8, 5, 10, 2, 7 ] & (2, 3,-) \\  
\hline
\end{array}$$
\end{footnotesize}
\end{center}
\end{table}

\begin{table}[tp]
\caption{Growable realizations for $\{3^a, 4^b,5^c\}$ separated by congruence modulo~$(3,4,5)$: case $(1,3,3)$ to $(2,2,1)$.
They are $x$-growable at $m_x$ in $\bm{m} = (m_3,m_4,m_5)$.}\label{tab:345b}
\begin{center}
\begin{footnotesize}
$$\begin{array}{llllll}
\hline
\text{Case} & (a,b,c) & \text{Realization}  & \bm{m} & \text{Method} \\ 
\hline
(1,3,3) & ( 1, 3, 8 ) &  [ 6, 11, 2, 7, 12, 3, 8, 0, 10, 5, 1, 9, 4 ] & ( 3, 4, 6 )  & 3 \\  
& ( 1, 7, 3 ) &  [ 9, 0, 4, 8, 1, 5, 10, 2, 6, 11, 7, 3 ] & ( 2, 3,-) \\  
& ( 4, 3, 3 ) & [ 4, 7, 0, 8, 3, 9, 1, 5, 10, 2, 6 ] & ( 3, 4,- ) \\  
\hline
(1,3,4) & ( 4, 3, 4 ) &  [ 4, 7, 0, 5, 1, 9, 6, 2, 11, 8, 3, 10 ] & ( 8, 3, 4  ) & 3 \\  
& ( 1, 3, 9 ) &  [ 10, 5, 1, 6, 11, 2, 7, 12, 3, 0, 4, 9, 13, 8 ] & ( -, 7, 8 ) \\  
& ( 1, 7, 4 ) &  [ 4, 8, 12, 7, 3, 11, 2, 6, 1, 10, 0, 5, 9 ] & ( -, 3, 4 ) \\ 
\hline
(1,3,5) & (1, 3, 10 ) &  [ 7, 12, 2, 6, 11, 1, 13, 8, 3, 14, 9, 4, 0, 10, 5 ] & ( 4, 5, 7) & 3 \\
& (1, 7, 5 ) &  [ 6, 11, 1, 5, 10, 0, 9, 12, 2, 7, 3, 13, 8, 4 ] & ( 3, 4, - ) \\ 
& ( 4, 3, 5 ) &  [ 11, 2, 5, 0, 8, 3, 12, 9, 1, 6, 10, 7, 4 ] & ( 3, 4, - ) \\ 
\hline
(1,4,1) & ( 1, 4, 11 ) &  [ 10, 15, 11, 16, 3, 7, 12, 0, 5, 8, 13, 1, 6, 2, 14, 9, 4 ] & ( 9, 3, 4 ) & 4 \\ 
& ( 1, 8, 1 ) &  [ 6, 0, 7, 3, 10, 2, 9, 5, 1, 8, 4 ] & (   5, 6, -   )  \\ 
& ( 4, 4, 1 ) &  [ 6, 0, 4, 7, 3, 8, 1, 5, 2, 9 ] & ( 2, 3,- ) \\  
& ( 1, 4, 6 ) &  [ 4, 9, 1, 6, 11, 8, 0, 5, 10, 2, 7, 3 ] & ( 2, 3,-)  \\ 
\hline
(1,4,2) & ( 1, 4, 7 ) &  [ 5, 9, 1, 6, 2, 10, 0, 8, 4, 12, 7, 3, 11 ] & ( 9, 4, 5 ) & 3 \\  
& ( 1, 8, 2 ) &  [ 11, 3, 7, 10, 6, 2, 9, 1, 5, 0, 8, 4 ] &  (8, 3, - ) \\  
& ( 4, 4, 2 ) & [ 5, 8, 0, 7, 3, 10, 2, 6, 1, 9, 4 ] & ( 3, 4, -) \\ 
\hline
(1,4,3) & ( 1, 4, 8 ) &  [ 4, 7, 12, 3, 8, 13, 9, 5, 0, 10, 1, 6, 2, 11 ] & ( 9, 3, 4 ) & 3 \\  
& ( 1, 8, 3 ) & [ 5, 9, 0, 8, 4, 12, 3, 7, 2, 11, 1, 10, 6 ] & (4, 5, - ) \\  
& ( 4, 4, 3 ) &  [ 4, 8, 11, 3, 7, 0, 5, 2, 10, 1, 6, 9 ] & ( 8, 3, - ) \\  
\hline
(1,4,4) & ( 1, 4, 9 ) & [ 10, 0, 5, 9, 14, 4, 8, 13, 3, 6, 1, 12, 2, 7, 11 ] & ( 9, 10, 4) & 3 \\ 
& ( 1, 8, 4 ) & [ 7, 11, 1, 5, 10, 0, 4, 9, 6, 2, 12, 3, 13, 8 ] &  (6, 7, -) \\  
& ( 4, 4, 4 ) &  [ 4, 9, 0, 10, 6, 1, 11, 2, 5, 8, 3, 12, 7 ] & ( 3, 4, -) \\ 
\hline
(1,4,5) & ( 1, 4, 5 ) &  [ 4, 9, 5, 0, 7, 2, 10, 6, 1, 8, 3 ] & (2, 3, 4) & 1 \\  
\hline
(2,1,1) & ( 5, 5, 6 ) & [ 7, 12, 8, 4, 16, 3, 0, 13, 9, 6, 1, 15, 10, 5, 2, 14, 11 ] & (6, 7, 11) & 5 \\ 
& ( 8, 1, 1 ) &  [ 10, 2, 5, 8, 0, 4, 7, 1, 9, 6, 3 ] & ( 4,-,-)  & \\  
& ( 2, 9, 1 ) &  [ 9, 5, 1, 6, 10, 0, 4, 8, 12, 2, 11, 7, 3 ] & (-, 6,-) \\ 
& ( 2, 5, 6 ) &  [ 9, 0, 5, 2, 11, 1, 6, 10, 13, 4, 8, 12, 7, 3 ] & ( -,  9, 4) \\   
& ( 5, 1, 6 ) & [ 0, 5, 2, 10, 7, 4, 1, 6, 11, 3, 8, 12, 9 ] & ( 8,-, 4)   \\
& ( 5, 5, 1 ) & [ 1, 5, 8, 4, 0, 9, 2, 11, 3, 6, 10, 7 ] & ( 5, 7, -) \\ 
\hline
(2,1,2) &  ( 2, 1, 12) & [ 5, 10, 15, 4, 7, 2, 13, 8, 3, 14, 9, 6, 1, 12, 0, 11 ] & (10, 4, 5) & 4 \\
& ( 2, 5, 2  ) & [ 0, 5, 9, 3, 6, 2, 7, 1, 4, 8 ] & ( 3, 5, -)  \\ 
& ( 8, 1, 2 ) &  [ 9, 0, 3, 6, 2, 11, 4, 7, 10, 1, 8, 5 ] & (  4, 5,-)  \\  
& ( 2, 1, 7 ) &  [ 2, 7, 1, 8, 0, 6, 3, 9, 4, 10, 5 ] &  (4, 5,-)  \\   
\hline
(2,1,3) & ( 2, 5, 3 ) &  [ 3, 8, 4, 0, 7, 2, 10, 6, 1, 9, 5 ] &  ( 2, 3, 5 ) & 3  \\  
& ( 2, 1, 13) &  [ 15, 10, 5, 0, 12, 16, 11, 6, 3, 8, 13, 1, 4, 9, 14, 2, 7 ] & ( 2,-,  6 ) \\
& ( 8, 1, 3 ) &  [ 9, 12, 2, 5, 0, 3, 8, 11, 1, 6, 10, 7, 4 ] & ( 8,-, 4 )   \\
\hline
(2,1,4) & ( 8, 1, 4  ) & [ 6, 9, 13, 10, 5, 2, 7, 4, 1, 12, 3, 8, 11, 0 ] & (10, 5, 6 ) & 4 \\   
& ( 2, 1, 9 ) &  [ 0, 5, 8, 3, 11, 6, 1, 10, 2, 7, 12, 9, 4 ] & (-, 3, 4) \\ 
& ( 2, 5, 4 ) & [ 5, 9, 2, 6, 1, 10, 3, 8, 0, 4, 7, 11 ] & (-,3, 5) \\
& ( 5, 1, 4 ) &  [ 6, 1, 9, 4, 10, 2, 7, 3, 0, 8, 5 ] & (-, 4, 5)  \\   
\hline
(2,1,5) & ( 2, 1, 10 ) & [ 5, 8, 13, 4, 9, 0, 10, 1, 6, 3, 12, 7, 2, 11 ] &  (9, 4, 5) & 3 \\  
& ( 2, 5, 5 ) & [ 3, 7, 10, 1, 9, 5, 0, 8, 4, 12, 2, 11, 6 ] & ( 5, 6, - ) \\   
& ( 5, 1, 5 ) &  [ 10, 1, 5, 8, 3, 0, 7, 2, 11, 6, 9, 4 ] & (2, 4, - ) \\ 
\hline
(2,2,1) & ( 2, 2, 6 ) & [ 4, 8, 3, 9, 1, 6, 0, 7, 2, 10, 5 ] & ( 3, 4, 5 ) & 3 \\  
& ( 2, 6, 1) &  [ 3, 7, 1, 5, 8, 2, 6, 9, 4, 0 ] &  (2, 3, -)  \\  
& ( 8, 2, 1) &  [ 3, 6, 9, 0, 4, 1, 8, 5, 2, 11, 7, 10 ] & (  2, 3, -) \\
\hline
\end{array}$$
\end{footnotesize}
\end{center}
\end{table}

\begin{table}[tp]
\caption{Growable realizations for $\{3^a, 4^b,5^c\}$ separated by congruence modulo~$(3,4,5)$: case $(2,2,2)$ to $(3,1,4)$.
They are $x$-growable at $m_x$ in $\bm{m} = (m_3,m_4,m_5)$.}\label{tab:345c}
\begin{center}
\begin{footnotesize}
$$\begin{array}{llllll}
\hline
\text{Case} & (a,b,c) & \text{Realization}  & \bm{m} & \text{Method} \\ 
\hline
(2,2,2) & ( 2, 2, 7) &  [ 4, 9, 1, 6, 11, 2, 7, 10, 5, 0, 8, 3 ] & ( 2, 3, 4  ) & 3 \\ 
& ( 2, 6, 2 ) & [ 5, 10, 3, 7, 0, 6, 2, 9, 1, 4, 8 ] & ( 3, 5, -) \\ 
& ( 5, 2, 2 ) &  [ 4, 7, 3, 0, 5, 8, 1, 6, 2, 9 ] & ( 4, 5,- ) \\
\hline
(2,2,3) & ( 2, 2, 8) &  [ 7, 12, 2, 10, 1, 9, 4, 0, 5, 8, 3, 11, 6 ] & ( 5, 6, 7 ) & 3 \\   
& ( 2, 6, 3) &  [ 6, 11, 3, 7, 2, 10, 1, 8, 4, 0, 9, 5 ] & ( 4, 5, - ) \\ 
& ( 5, 2, 3 ) & [ 3, 8, 0, 5, 2, 9, 1, 6, 10, 7, 4 ] & ( 2, 3, - )  \\  
\hline
(2,2,4) & (  2, 2, 9 ) &  [ 7, 12, 2, 11, 1, 10, 5, 0, 3, 8, 13, 4, 9, 6 ] & ( 5, 6, 7 ) & 3 \\   
& ( 2, 6, 4 ) &  [ 1, 9, 0, 5, 8, 4, 12, 3, 11, 2, 6, 10, 7 ] & (6, 7,- ) \\  
& ( 5, 2, 4 ) &  [ 4, 7, 10, 2, 5, 0, 9, 6, 1, 8, 3, 11 ] & ( 8, 3,- ) \\   
\hline
(2,2,5) & ( 2, 2, 10 ) &  [ 10, 0, 5, 8, 13, 3, 7, 12, 2, 6, 1, 11, 14, 9, 4 ] & (9, 3, 4 ) & 3 \\  
& ( 2, 6, 5 ) &  [ 5, 9, 0, 4, 1, 10, 6, 2, 11, 8, 13, 3, 7, 12 ] & (  7, 3, -) \\  
&  ( 5, 2, 5 ) &  [ 4, 0, 10, 2, 7, 12, 9, 1, 5, 8, 11, 6, 3 ] & ( 2, 3,- ) \\  
\hline
(2,3,1) & ( 2, 3, 6 ) &  [ 4, 7, 11, 6, 1, 9, 2, 5, 0, 8, 3, 10 ] & ( 8, 3, 4  ) & 3 \\ 
& ( 2, 7, 1 ) &  [ 4, 1, 8, 5, 0, 7, 3, 10, 6, 2, 9 ] & ( 7, 3,- ) \\  
& ( 5, 3, 1 ) &  [ 6, 9, 2, 5, 1, 8, 3, 7, 4, 0 ] & (2, 3,- )  \\  
\hline
(2,3,2) & ( 2, 3, 7 ) &  [ 4, 9, 1, 6, 2, 10, 0, 5, 8, 12, 7, 3, 11 ] & ( 9, 3, 4 ) & 3 \\  
& ( 2, 7, 2 ) &  [ 5, 10, 2, 6, 9, 1, 8, 4, 0, 3, 7, 11 ] & ( 4, 5,- ) \\   
& ( 5, 3, 2) &  [ 5, 9, 1, 7, 2, 10, 6, 3, 0, 8, 4 ] & (4, 5,- ) \\  
\hline
(2,3,3) & ( 2, 3, 8 ) &  [ 9, 0, 5, 2, 11, 6, 1, 10, 7, 3, 12, 8, 4, 13 ] & ( 8, 9, 4 ) & 3 \\  
& ( 2, 7, 3 ) &  [ 5, 10, 1, 11, 2, 6, 9, 0, 4, 8, 3, 12, 7 ] & ( 4, 5, - ) \\   
& ( 5, 3, 3 ) &  [ 4, 8, 1, 6, 9, 0, 5, 2, 11, 3, 7, 10 ] & ( 8, 3, - ) \\  
\hline
(2,3,4) & ( 2, 3, 9 ) &  [ 6, 11, 1, 12, 2, 7, 10, 14, 9, 4, 0, 5, 8, 3, 13 ] & (  11, 5, 6  ) & 2 \\ 
& ( 2, 3, 4 ) &  [ 1, 6, 2, 7, 3, 0, 5, 8, 4, 9 ] & (4, 5,-)  \\  
\hline
(2,3,5) & ( 2, 3, 5 ) &  [ 5, 10, 7, 2, 9, 1, 6, 0, 4, 8, 3 ] & (  2, 3, 5  ) & 1 \\ 
\hline
(2,4,1) & (2 , 4, 6 ) &  [ 10, 2, 6, 1, 11, 3, 7, 12, 9, 5, 0, 8, 4 ] & ( 9, 3, 4  ) & 3 \\ 
& ( 2 , 8, 1) &  [ 7, 11, 3, 6, 10, 2, 9, 1, 5, 8, 4, 0 ] & ( 5, 7,-)  \\ 
& ( 5, 4, 1 ) &  [ 2, 5, 9, 1, 8, 0, 3, 7, 4, 10, 6 ] & (  4, 6,-) \\  
\hline
(2,4,2) & ( 2, 4, 7 ) & [ 6, 11, 2, 7, 12, 1, 10, 0, 4, 9, 13, 3, 8, 5 ] & (  4, 5, 6 ) & 3 \\ 
& ( 2, 8, 2 ) &  [ 10, 1, 6, 9, 0, 5, 2, 11, 7, 3, 12, 8, 4 ] & ( 8, 3, - ) \\   
& ( 5, 4, 2 ) &  [ 9, 6, 3, 0, 5, 1, 10, 2, 7, 11, 8, 4 ] & ( 8, 3, -  ) \\  
\hline
(2,4,3) & (  2, 4, 8 ) &  [ 9, 14, 4, 7, 11, 6, 1, 5, 10, 0, 3, 13, 2, 12, 8 ] & ( 6, 8, 9 ) & 2 \\  
& ( 2, 4, 3 ) &  [ 7, 1, 5, 0, 4, 9, 2, 6, 3, 8 ] & (2, 3,- ) \\  
\hline
(2,4,4) & ( 2, 4, 4 ) &  [ 3, 8, 2, 6, 10, 7, 0, 5, 1, 9, 4 ] & (  2, 3, 4 ) & 1 \\ 
\hline
(2,4,5) & ( 2, 4, 5 ) &  [ 3, 7, 11, 8, 1, 6, 2, 10, 5, 0, 9, 4 ] & (2, 3, 4 ) & 1 \\ 
\hline
(3,1,1) & ( 3, 1, 6 ) &  [ 5, 10, 2, 7, 1, 6, 9, 4, 0, 8, 3 ] & (2, 3, 5 ) & 3 \\  
& ( 3, 5, 1 ) &  [ 9, 2, 6, 0, 4, 7, 3, 8, 5, 1 ] & (2, 3,-) \\  
& ( 9, 1, 1 ) &  [ 11, 2, 5, 8, 0, 3, 6, 9, 4, 1, 10, 7 ] &  (  2, 7, - )  \\  
\hline
(3,1,2) & ( 3, 1, 7  ) & [ 10, 3, 8, 11, 6, 1, 9, 2, 5, 0, 7, 4 ] & ( 8, 3, 4  ) & 3 \\ 
& ( 3, 5, 2 ) &  [ 4, 7, 0, 6, 2, 10, 3, 8, 1, 9, 5 ] & ( 4, 5,-) \\  
& ( 6, 1, 2 ) & [ 5, 2, 7, 0, 4, 1, 8, 3, 6, 9 ] & ( 2, 3,- ) \\  
\hline
(3,1,3) & ( 3, 1, 8 ) &  [ 4, 7, 12, 9, 1, 6, 10, 2, 5, 0, 8, 3, 11 ] & ( 8, 3, 4 ) & 3 \\  
& ( 3, 5, 3 ) &  [ 4, 9, 5, 1, 10, 6, 11, 8, 0, 3, 7, 2 ] & ( 2, 4, -) \\  
& ( 6, 1, 3 )  &  [ 3, 8, 5, 2, 10, 7, 4, 0, 6, 1, 9 ] & (  2, 3,- )  \\ 
\hline
(3,1,4) & (3, 1, 9 ) &  [ 4, 9, 0, 3, 8, 13, 10, 5, 1, 12, 7, 2, 11, 6 ] & ( 2, 4, 6 ) & 3 \\ 
& ( 3, 5, 4 ) & [ 4, 8, 0, 5, 1, 11, 6, 2, 10, 7, 3, 12, 9 ] & ( 8, 3,-) \\  
& ( 6, 1, 4 ) &  [ 11, 2, 7, 4, 1, 8, 3, 0, 9, 6, 10, 5 ] & (2, 5,-)   \\  
\hline
\end{array}$$
\end{footnotesize}
\end{center}
\end{table}

\begin{table}[tp]
\caption{Growable realizations for $\{3^a, 4^b,5^c\}$ separated by congruence modulo~$(3,4,5)$: case $(3,1,5)$ to $(3,4,5)$.
They are $x$-growable at $m_x$ in $\bm{m} = (m_3,m_4,m_5)$.}\label{tab:345d}
\begin{center}
\begin{footnotesize}
$$\begin{array}{llllll}
\hline
\text{Case} & (a,b,c) & \text{Realization}  & \bm{m} & \text{Method} \\ 
\hline
(3,1,5) & ( 3, 1, 10) &  [ 13, 3, 8, 11, 1, 6, 10, 0, 5, 2, 12, 7, 4, 14, 9 ] & (  8, 9, 4  ) & 3 \\ 
& ( 3, 5, 5 ) &  [ 10, 5, 8, 13, 9, 0, 11, 1, 6, 2, 12, 3, 7, 4 ] & ( 3, 4,-) \\  
& ( 6, 1, 5 ) &  [ 5, 10, 0, 8, 3, 12, 2, 7, 4, 1, 11, 6, 9 ] & ( 3, 5,-)  \\   
\hline
(3,2,1) & ( 3, 2, 6) &  [ 4, 7, 11, 6, 1, 8, 3, 10, 2, 5, 0, 9 ] & ( 8, 3, 4 ) & 3 \\   
& ( 3, 6, 1) &  [ 6, 2, 9, 1, 8, 4, 0, 3, 7, 10, 5 ] & ( 4, 5,-) \\   
& ( 6, 2, 1) &  [ 2, 5, 1, 8, 4, 9, 6, 3, 0, 7 ] & (2, 4,-) \\ 
\hline
(3,2,2) & ( 3, 2, 7 ) & [ 6, 11, 3, 8, 0, 4, 7, 2, 12, 9, 1, 10, 5 ] & (4, 5, 6) & 3\\   
& ( 3, 6, 2 ) &  [ 5, 1, 4, 8, 11, 3, 7, 0, 9, 2, 6, 10 ] & (8, 4,-)  \\   
& ( 6, 3, 2 ) &  [ 7, 0, 3, 6, 11, 8, 4, 1, 10, 2, 5, 9 ] & (2, 7,-) \\  
\hline
(3,2,3) & ( 3, 2, 8 ) &  [ 6, 11, 7, 2, 13, 4, 9, 0, 3, 8, 12, 1, 10, 5 ] & (4, 5, 6 ) & 3 \\   
& ( 3, 6, 3 ) &  [ 11, 1, 6, 10, 2, 5, 9, 0, 3, 7, 12, 8, 4 ] & ( 2, 4,-) \\  
& ( 6, 2, 3) &  [ 11, 8, 5, 1, 10, 7, 0, 3, 6, 2, 9, 4 ]  & (2, 4,-)  \\  
\hline
(3,2,4) & ( 3, 2, 9) &  [ 3, 8, 13, 10, 5, 0, 12, 7, 11, 1, 6, 2, 14, 9, 4 ] & ( 2, 3, 4 ) & 3 \\ 
& ( 3, 6, 4 ) &  [ 6, 11, 7, 2, 13, 9, 12, 1, 5, 10, 0, 4, 8, 3 ] & ( 2, 3,-) \\  
& ( 6, 2, 4 ) &  [ 0, 3, 7, 10, 1, 9, 4, 12, 2, 5, 8, 11, 6 ] & ( 4, 6,- ) \\  
\hline
(3,2,5) & ( 3, 2, 5) &  [ 3, 6, 1, 8, 0, 5, 10, 7, 2, 9, 4 ] & ( 2, 3, 4 ) & 1 \\   
\hline
(3,3,1) & ( 3, 3, 6 ) &  [ 3, 8, 11, 7, 12, 2, 6, 1, 9, 4, 0, 10, 5 ] & (  2, 3, 5 ) & 3 \\   
& ( 3, 7, 1 ) & [ 3, 7, 11, 6, 2, 10, 1, 4, 0, 8, 5, 9 ] & ( 2, 3,-) \\   
& ( 6, 3, 1 ) &  [ 9, 5, 8, 0, 4, 1, 7, 10, 2, 6, 3 ] & (  2, 3,-)  \\  
\hline
(3,3,2) & (3, 3, 7) &  [ 11, 2, 6, 1, 10, 13, 4, 9, 0, 3, 7, 12, 8, 5 ] & ( 9, 4, 5  ) & 3 \\
& ( 3, 7, 2 ) &  [ 6, 10, 1, 9, 5, 2, 11, 8, 4, 0, 3, 12, 7 ] & ( 6, 7,-) \\ 
& ( 6, 3, 2 ) &  [ 1, 10, 7, 11, 2, 6, 9, 4, 0, 3, 8, 5 ] & (4, 5,-)  \\  
\hline
(3,3,3) & ( 3, 3, 8 ) &  [ 9, 12, 2, 7, 10, 14, 4, 8, 13, 3, 6, 11, 1, 5, 0 ] & ( 8, 9, 4 ) & 2 \\  
& ( 3, 3, 3 ) &  [ 7, 3, 8, 2, 5, 0, 4, 1, 6, 9 ] & ( 2, 3,-)  \\ 
\hline
(3,3,4) & (  3, 3, 4 ) &  [ 5, 10, 7, 2, 9, 4, 0, 8, 1, 6, 3 ] & ( 2, 3, 5 ) & 1 \\  
\hline
(3,3,5) & ( 3, 3, 5 ) & [ 6, 11, 2, 7, 0, 9, 5, 1, 10, 3, 8, 4 ] & ( 3, 4, 6 ) & 1 \\  
\hline
(3,4,1) & ( 3, 4, 6) &  [ 2, 5, 0, 11, 6, 1, 10, 7, 3, 12, 8, 4, 13, 9 ] & ( 8, 9, 4 ) & 3 \\  
& ( 3, 8, 1) &  [ 7, 11, 3, 12, 2, 6, 9, 5, 1, 10, 0, 4, 8 ] & ( 6, 7,- )  \\ 
& ( 6, 4, 1) &  [ 4, 7, 3, 0, 8, 11, 2, 6, 9, 1, 10, 5 ] & ( 4, 5,-)  \\   
\hline
(3,4,2) & ( 3, 4, 7 ) &  [ 3, 8, 13, 10, 7, 12, 1, 5, 0, 11, 6, 2, 14, 9, 4 ] & (2, 3, 4) & 2 \\   
& ( 3, 4, 2 ) &  [ 5, 8, 4, 9, 2, 6, 1, 7, 3, 0 ] & (4, 5,-)  \\   
\hline
(3,4,3) & ( 3, 4, 3 ) &  [ 4, 8, 0, 7, 3, 9, 1, 6, 2, 10, 5 ] & (3, 4, 5) & 1 \\  
\hline
(3,4,4) & ( 3, 4, 4 ) &  [ 11, 7, 3, 10, 2, 5, 0, 9, 6, 1, 8, 4] & ( 8, 3,  4) & 1 \\   
\hline
(3,4,5) & ( 3, 4, 5) &  [ 4, 7, 11, 3, 8, 12, 2, 5, 0, 9, 1, 6, 10 ] & ( 8, 3, 4) & 1  \\
\hline
\end{array}$$
\end{footnotesize}
\end{center}
\end{table}

\begin{cor}\label{cor:max5}
Suppose $|U| = 3$ and $\max{(U)} \leq 5$.  Then $\BHR(L)$ holds for multisets with underlying set~$U$. 
\end{cor}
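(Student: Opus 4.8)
The plan is to reduce the corollary to a finite check over the size-three subsets of $\{1,2,3,4,5\}$, of which there are exactly $\binom{5}{3}=10$, namely $\{1,2,3\}$, $\{1,2,4\}$, $\{1,2,5\}$, $\{1,3,4\}$, $\{1,3,5\}$, $\{1,4,5\}$, $\{2,3,4\}$, $\{2,3,5\}$, $\{2,4,5\}$ and $\{3,4,5\}$; each of these I would dispatch by invoking an already-established result.

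Seven of the ten are contained in $\{1,2,3,4\}$ or in $\{1,2,3,5\}$: the sets $\{1,2,3\}$, $\{1,2,4\}$, $\{1,3,4\}$ and $\{2,3,4\}$ lie inside $\{1,2,3,4\}$, while $\{1,2,5\}$, $\{1,3,5\}$ and $\{2,3,5\}$ lie inside $\{1,2,3,5\}$ (and $\{1,2,3\}$ lies in both). For these I would cite the solution of the BHR Conjecture for every multiset whose underlying set is a subset of $\{1,2,3,4\}$~\cite{OPPS2} or of $\{1,2,3,5\}$~\cite{PP14b}, with $\{1,2,3\}$ additionally covered by~\cite{CD10}. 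The eighth set, $\{1,4,5\}$, is treated in~\cite{OPPS2}. Finally, $\{2,4,5\}$ and $\{3,4,5\}$ are precisely the content of Theorem~\ref{th:245_345} above.

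Since every size-three set~$U$ with $\max(U)\leq 5$ equals one of these ten sets, and each has now been accounted for, the corollary follows. I do not expect any genuine obstacle: all of the difficulty is already absorbed into Theorem~\ref{th:245_345} and the cited literature, so the remaining task is purely one of bookkeeping. The single point that warrants care is that the enumeration is exhaustive — in particular that $\{1,4,5\}$, which is a subset of neither $\{1,2,3,4\}$ nor $\{1,2,3,5\}$, is not quietly dropped — which is why I would cross-check the list of ten against $\binom{5}{3}$.
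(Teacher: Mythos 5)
Your proposal is correct and follows essentially the same route as the paper: reduce to subsets of $\{1,2,3,4\}$ or $\{1,2,3,5\}$ via~\cite{OPPS2, PP14b}, handle $\{1,4,5\}$ by~\cite{OPPS2}, and cover $\{2,4,5\}$ and $\{3,4,5\}$ by Theorem~\ref{th:245_345}. The only difference is that you enumerate all ten subsets explicitly where the paper just observes the remaining cases must have the form $\{x,4,5\}$ with $x\leq 3$; the content is identical.
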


\begin{proof}
It is known that $\BHR(L)$ holds when the underlying set of`$L$ is a subset of $\{1,2,3,4\}$ or $\{1,2,3,5\}$~\cite{OPPS2, PP14b}.  Hence we may assume~$U = \{ x,4,5 \}$ with $x \leq 3$.  The case $\{ 1,4,5 \}$ is covered in~\cite{OPPS2} and $\{2,4,5 \}$ and $\{3,4,5\}$ are covered by Theorem~\ref{th:245_345}.
\end{proof}

\section{Underlying Set $\{ 1,2, x \}$ }\label{sec:12x}

In this section we study underlying sets of the form $\{ 1,2,x \}$ with~$x$ even.  However, before narrowing our focus, we give the more general result that is behind the work of this section.

\begin{lem}\label{lem:many_x} 
Let $U$ be a set with $\max{(U)} = \mu$.  Let $L$ be an admissible multiset of size~$v-1$ with underlying set~$U$ and suppose $x$ appears at least $(\mu -1)v / \mu$ times in~$L$.  Then a realization of~$L$ is necessarily $x$-growable.
\end{lem}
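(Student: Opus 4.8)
The plan is to rephrase ``$\bm h$ is $x$-growable at $m$'' as an explicit combinatorial condition on a realization $\bm h$ of $L$ and then to locate a suitable $m$ by exploiting that $\bm h$ has very few edges of length other than $x$. First I would pass to the cyclic picture: arrange $0,1,\dots,v-1$ on a cycle, so that an edge of length $\ell$ occupies the \emph{short arc} between its endpoints, a set of exactly $\ell$ of the $v$ gaps between consecutive labels; the growth embedding at $m$ inserts $x$ new labels into the gap between $m$ and $m+1$, and the length of an edge strictly increases under the embedding precisely when its short arc contains that gap. This gives the description: $\bm h$ is $x$-growable at $m$ if and only if (a) the gap at $m$ lies in the short arc of exactly $x$ edges of $\bm h$, and (b) each of $m-x+1,\dots,m$ is the endpoint lying in $\{0,\dots,m\}$ of exactly one of those $x$ edges. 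The first payoff is the identity $\sum_{m=0}^{v-1}(\text{number of edges lengthened at }m)=\sum_{\ell\in L}\ell=xc+S$, since an edge of length $\ell$ is lengthened at exactly $\ell$ thresholds; here $c\ge(\mu-1)v/\mu$ is the multiplicity of $x$ and $S\le\mu\bigl(v/\mu-1\bigr)=v-\mu$ collects the at most $v/\mu-1$ other elements, so the average number of lengthened edges over all $m$ sits just above $x-x/\mu$.

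I would then restrict to thresholds $m$ in a central range, bounded away from $0$ and from $v-1$ by at least $\mu$; for such $m$ no wrapping edge's short arc contains the gap at $m$, so the edges lengthened at $m$ are exactly the edges of $\bm h$ joining $\{0,\dots,m\}$ to $\{m+1,\dots,v-1\}$, and since every edge has length at most $\mu$ any crossing edge of length at most $x$ has its $\{0,\dots,m\}$-endpoint automatically inside $\{m-x+1,\dots,m\}$. Thus in this range verifying (a) and (b) reduces to checking that each of $m-x+1,\dots,m$ has exactly one neighbour above $m$ and that no vertex at most $m-x$ is incident with a lengthened edge.

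The core is a counting step powered by the bound $v/\mu-1$ on the number of edges of length $\ne x$. Deleting those edges breaks $\bm h$ into at most $v/\mu$ subpaths, each made up solely of length-$x$ edges, and away from the extreme labels such a subpath runs monotonically through a contiguous block of a single residue class modulo $x$. Consequently, for all but a controlled number of thresholds the crossing edges at $m$ are exactly the $x$ ``straight'' length-$x$ edges joining $\{m-x+1,\dots,m\}$ to $\{m+1,\dots,m+x\}$, which is precisely what (b) asks. I would bound the number of \emph{spoiled} thresholds — those at which one of the $\le v/\mu-1$ exceptional edges crosses in a harmful position, at which two runs of $\bm h$ inside one residue class abut, or at which a wrapping length-$x$ edge interferes — against the length of the central range, and conclude that a good threshold survives; the finitely many values of $v$ for which the central range is too short to support this estimate fall under Theorem~\ref{th:small}.

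The main obstacle will be this last step, not the arithmetic. Establishing the displayed identity and its average is routine, but upgrading ``the threshold at $m$ carries exactly $x$ lengthened edges for many $m$'' to ``(a) and (b) both hold at some such $m$'' requires genuine control of the \emph{incidence} structure — matching each of $m-x+1,\dots,m$ to a unique crossing edge. Two features make this awkward: the wrapping edges of length $x$ do not respect the residue-class decomposition of $\bm h$ and have to be tracked by hand near the extreme labels, and one must exclude the possibility that one of the few edges of length $\ne x$ straddles the window $\{m-x+1,\dots,m\}$ so as to leave some window vertex with two lengthened edges or a vertex just below the window with one. Disposing carefully of these configurations, rather than the counting, is the crux.
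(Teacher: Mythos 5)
Your reduction of ``$x$-growable at $m$'' to conditions (a) and (b) and the identity $\sum_{m}(\#\text{edges lengthened at }m)=\sum_{\ell\in L}\ell$ are fine, but the argument is not actually completed: the step you yourself flag as the crux --- producing one threshold $m$ at which each of $m-x+1,\dots,m$ meets exactly one crossing edge and nothing below the window crosses --- is only announced (``I would bound the number of spoiled thresholds \dots and conclude that a good threshold survives''), with no bound given, and that is where all of the content of the lemma lives. Worse, the first moment points the wrong way: the average you compute is about $x-x/\mu$, strictly \emph{below} the $x$ crossing edges that growability requires, so the averaging identity cannot by itself locate even one threshold with $x$ crossings, let alone one with the correct incidence pattern; you would need a separate structural argument, which is precisely what is missing. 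Finally, the proposed fallback to Theorem~\ref{th:small} is a category error: that theorem says small admissible multisets are \emph{realizable}, whereas Lemma~\ref{lem:many_x} asserts that realizations are $x$-\emph{growable}, a property Theorem~\ref{th:small} says nothing about; and the lemma as stated has no small-$v$ exceptions, so the statement you would prove is not the statement required.

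For comparison, the paper's proof avoids thresholds-by-averaging entirely. Its key observation is label-side rather than path-side: if $\mu$ consecutive \emph{labels} $i+1,\dots,i+\mu$ are each incident with two edges of length $x$, then the realization is $x$-growable at $m=i+\mu$, because each label $j$ with $m-x<j\le m$ then has neighbours $j-x\le m$ and $j+x>m$ (so exactly one lengthened edge at $j$), while any other lengthened edge would have to join some $p\le m-\mu$ to some $q>m$ and hence have length exceeding $\mu$, which is impossible. The existence of such a run of $\mu$ consecutive ``all-$x$'' labels is then extracted from a count of incidences with the at most $v/\mu-1$ edges of length different from $x$. Your residue-class decomposition of the path is essentially a shadow of this observation; reorganizing the search around consecutive labels rather than around thresholds is what collapses the bookkeeping you correctly identify as the obstacle.
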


\begin{proof}
Suppose that there are $\mu$ consecutive elements $h_{i+1}, h_{i+2}, \ldots, h_{i + \mu}$ in a realization of~$L$ that are each incident with two edges of length~$x$.  Then the growth embedding at $m = i + \mu$ using~$x$ lengthens exactly one of the edges incident with each of the elements $h_{i + \mu -x + 1} , \ldots, h_{i + \mu}$.  Any other edge being lengthened must have length greater than~$\mu$, and so there are none and the realization is $x$-growable at~$i + \mu$.

Each element is incident with one or two edges.  Considering the elements adjacent to just one edge as being adjacent to that edge and an edge of length~0, there are $2v$ incidences in total.  At most $2v - (\mu -1)v / \mu = v - v/\mu$ of these are not with edges of length~$x$.  Hence there are insufficiently many non-$x$ incidences to avoid having a run of $\mu$ elements each incident with two edges of length~$x$.
\end{proof}

The following result allows Lemma~\ref{lem:many_x} to be useful for the underlying sets of interest in this section.

\begin{thm}\label{th:pp12x}{\rm \cite{PP14}}
Let $x \geq 4$ be even.  The multiset $\{ 1^a, 2^b, x^c \}$ satisfies the BHR Conjecture if $a+b \geq x-1$.
\end{thm}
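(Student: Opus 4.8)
The plan is to dispose of the admissibility side of the biconditional almost for free, and then produce a realization by a boustrophedon construction on residue classes modulo~$x$. Write $v = a+b+c+1$. A short check on the divisors of~$v$ shows that, under the hypothesis $a+b\ge x-1$, the multiset $L=\{1^a,2^b,x^c\}$ is admissible precisely when $a\ge 1$ and $2x\le v$: the condition attached to a divisor $d\ge 3$ of $\gcd(v,x)$ is $c\le v-d$, which is implied by $c\le v-x$, and $c\le v-x$ is merely a restatement of $a+b\ge x-1$; the only remaining nontrivial condition is $b+c\le v-2$ when $v$ is even, which says exactly $a\ge 1$. If $a=0$, $b=0$ or $c=0$ the underlying set has size at most~$2$ and the result is already known; if $2x>v$ then $x$ is not a legal length; and if $v$ is even with $a=0$ there is a parity obstruction --- so in all these cases $\BHR(L)$ holds trivially. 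It therefore remains only to \emph{construct} a realization whenever $a,b,c\ge 1$, $a+b\ge x-1$ and $2x\le v$.

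For the construction I would partition $\{0,1,\ldots,v-1\}$ into the $x$ residue classes modulo~$x$, thought of as ``columns'', so that a step of length~$x$ moves within a column while steps of lengths~$1$ and~$2$ move between columns. Traversing the columns one at a time in a snake, joined by length-$1$ transitions, realizes a multiset of the shape $\{1^{x-1},x^{v-x}\}$, with turn-arounds forced at the $s$ taller columns when $x\nmid v$ (where $s\equiv v\pmod{x}$, $0\le s<x$), or equivalently after peeling off the short ``overhang'' segment $\{kx,\ldots,v-1\}$, $k=\lfloor v/x\rfloor$, and re-attaching it by a single length-$x$ edge. From this base, two kinds of local surgery reach the target triple $(a,b,c)$: reordering which columns are visited consecutively, and replacing a length-$1$ transition by a length-$2$ transition to the column two along, trades ones for twos among the $x-1$ transitions; and inside a pair of adjacent columns, replacing a stretch of vertical ($x$-length) moves by a horizontal ``mini-snake'' (single vertical moves alternating with length-$1$ moves between the two columns) trades $x$-edges for the same number of length-$1$ edges, with an analogous ``skip-a-column'' detour trading $x$-edges for length-$2$ edges. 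Since $a+b-(x-1)$ surplus short edges are available, iterating these detours lowers the number of $x$-edges from $v-x$ down to exactly~$c$ while consuming exactly the surplus of ones and twos; one must check along the way that no edge of an unwanted length (such as $x-2$, or a wrap-around edge of length~$x$) is introduced and that every vertex is covered exactly once.

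One can also shrink the region needing an explicit construction. By Lemma~\ref{lem:many_x} with $\mu = x$, once $c\ge (x-1)(a+b+1)$ every realization of $\{1^a,2^b,x^c\}$ is $x$-growable, so Theorem~\ref{th:grow} builds a realization of $\{1^a,2^b,x^c\}$ from any realization of $\{1^a,2^b,x^{c-x}\}$; inducting on~$c$ leaves only the cases $c<(x-1)(a+b+1)+x$, and Theorem~\ref{th:small} clears all sufficiently small~$v$. This is genuinely useful but does not bound $a+b$, so it supplements rather than replaces the construction.

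The main obstacle is precisely making the surgery of the second paragraph uniform across \emph{all} admissible $(a,b,c)$: the uneven column heights when $x\nmid v$ make the snake's turn-arounds and inter-column transitions delicate, and one must verify the detour count can always be tuned to hit $(a,b,c)$ on the nose. The tightest point is the extreme case $c=x$, $v=2x$, $a+b=x-1$, where length~$x$ is the diameter of $K_{2x}$ and the $x$-edges form a perfect matching, leaving essentially no slack; there one must isolate the nearly forced configuration, show it uses exactly one length-$1$ edge, and confirm it is compatible with every admissible split of the remaining short edges into ones and twos.
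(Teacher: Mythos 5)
This statement is imported from~\cite{PP14}; the paper you were given offers no proof of it, so there is no internal argument to compare against, and your proposal has to stand on its own. The parts of it that are checkable are fine: the admissibility analysis is correct (for a divisor $d\ge 3$ of $v$ the only multiples of $d$ in $L$ are the $c$ copies of $x$, and $c\le v-d$ follows from $c\le v-x$, which is exactly $a+b\ge x-1$; for $d=2$ the condition is $a\ge 1$; the degenerate cases $a=0$, $b=0$, $c=0$, $2x>v$ reduce to known results or vacuity). The reduction via Lemma~\ref{lem:many_x} with $\mu=x$ and Theorem~\ref{th:grow} is also sound, and you correctly note that it bounds only $c$ in terms of $a+b$ and therefore cannot replace a construction, since $a+b$ is unbounded above under the hypothesis.

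The genuine gap is that the heart of the argument --- the snake through residue classes modulo $x$ followed by ``local surgery'' --- is a plan, not a proof. You never pin down the detour operations precisely enough to verify (i) that each one alters the length multiset by exactly the advertised trade and nothing else, (ii) that composing them from the base multiset $\{1^{x-1},x^{v-x}\}$ reaches \emph{every} admissible triple $(a,b,c)$ exactly (the set of triples reachable from a fixed starting point under a list of local moves is not automatically all of them, and the interaction between the between-column trades of $1$'s for $2$'s and the within-column trades of $x$'s for short edges needs an explicit bookkeeping argument), and (iii) that the turn-arounds at columns of unequal height and the wrap-around edges when $x\nmid v$ never introduce a forbidden length. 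You flag all three difficulties yourself, along with the rigid extremal case $v=2x$, $c=x$, $a+b=x-1$, where the $x$-edges form a perfect matching and there is essentially no slack; that case in particular cannot be waved through. The skeleton you describe is the right one --- the published proof in~\cite{PP14} likewise builds the required Hamiltonian paths by concatenating explicitly described blocks living in and between residue classes --- but everything that turns the skeleton into a theorem is missing here.
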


We are therefore concerned here with multisets $\{ 1^a, 2^b, x^c \}$ that have even $x \geq 4$ and with $a+b < x-1$.  We start by putting an upper bound on the smallest counterexample.

\begin{thm}
Let $x \geq 4$ be even and $L = \{ 1^a, 2^b, x^c \}$.  If there is a counterexample to the BHR Conjecture for a multiset of this form, then there is one with $a+b < x-1$ and $c < x^2 - x + 1$.
\end{thm}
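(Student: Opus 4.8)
The plan is a minimal‑counterexample argument: if $L=\{1^a,2^b,x^c\}$ is a counterexample with $v=a+b+c+1$ as small as possible, delete $x^x$ to get $L'=\{1^a,2^b,x^{c-x}\}$, show $L'$ is still an admissible multiset with underlying set $\{1,2,x\}$, so by minimality $L'$ is realizable, then show that \emph{any} realization of $L'$ is $x$-growable, so Theorem~\ref{th:grow} produces a realization of $L'\cup\{x^x\}=L$ --- a contradiction. Before this, Theorem~\ref{th:pp12x} disposes of the case $a+b\ge x-1$ and the solved cases of underlying set of size at most $2$ handle $abc=0$, so every counterexample already has $a,b,c\ge 1$ and $a+b<x-1$; all the remaining work is in bounding $c$. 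So I fix the standing assumption $c\ge x^2-x+1$ (aiming for a contradiction) and record, for $x\ge4$ even, the easy consequences: $c-x\ge(x-1)^2\ge 1$, so $L'$ genuinely has underlying set $\{1,2,x\}$; $v\ge c+1\ge x^2-x+2\ge 3x$, so $x\le\lfloor v'/2\rfloor$; and, from $a+b\le x-2$, $v\le c+x-1$, hence $v'=v-x\le c-1$.

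The core of the argument is checking that $L'$ is admissible. I would run through the divisors $d$ of $v'$: $d=1$ is automatic; $d=2$ occurs only when $2\mid v$ (since $x$ is even), and there the count $b+(c-x)$ of even lengths must be $\le v'-2$, i.e. $a\ge 1$, which is exactly what admissibility of $L$ already forces; for $d\ge3$ with $d\nmid x$ no entry of $L'$ is a multiple of $d$; and for $d\ge3$ with $d\mid x$, from $d\mid v'$ and $d\mid x$ we get $d\mid v'+x=v$, so admissibility of $L$ at $d$ gives $c\le v-d$, hence $c-x\le v'-d$. Thus $L'$ is admissible with underlying set $\{1,2,x\}$, $a+b<x-1$, and $v'<v$, so minimality of $L$ makes $L'$ realizable.

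Finally I would apply Lemma~\ref{lem:many_x} to $L'$ with $\mu=\max\{1,2,x\}=x$: it suffices that $x$ occurs at least $(x-1)v'/x$ times in $L'$, i.e. $c-x\ge(x-1)v'/x$. Since $v'\le c-1$, this reduces to $(x-1)(c-1)\le x(c-x)$, which rearranges exactly to $c\ge x^2-x+1$ --- the standing assumption. Hence every realization of $L'$ is $x$-growable, Theorem~\ref{th:grow} gives a realization of $L=L'\cup\{x^x\}$, contradicting the choice of $L$. This yields $c<x^2-x+1$, and $a+b<x-1$ is the opening observation.

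I expect the only delicate point to be the admissibility transfer from $L$ to $L'$ --- in particular the observation that a divisor $d$ of the smaller value $v'$ with $d\mid x$ automatically divides $v$, which is what lets the divisor bound on $c$ pass down to $v'$. Everything else is either a direct citation (Theorems~\ref{th:pp12x} and~\ref{th:grow}, or Lemma~\ref{lem:many_x}) or a one‑line inequality, and I would just double‑check the numerology ($3x\le v$, $c-x\ge1$, and the final rearrangement) at the boundary case $x=4$.
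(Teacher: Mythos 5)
Your proposal is correct and rests on the same engine as the paper's proof: Theorem~\ref{th:pp12x} to force $a+b<x-1$, and Lemma~\ref{lem:many_x} with $\mu=x$ to force $x$-growability once $c$ is large, with the same threshold arithmetic yielding $x^2-x+1$. The only difference is packaging --- you descend from a minimal counterexample by deleting $x^x$ (and, usefully, verify the admissibility transfer to $L'$ explicitly), whereas the paper grows upward from the window $x^2-2x+1\le c<x^2-x+1$ and leaves that transfer implicit.
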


\begin{proof}
First, by Theorem~\ref{th:pp12x}, every counterexample must have $a+b < x-1$.

We apply Lemma~\ref{lem:many_x}: any realization with underlying set~$U$ and $c \geq  xv/(x-1)$ must be $x$-growable.  As $v = a+b+c+1$ and $a+b \leq x-2$, this simplifes to $c \geq x^2 -2x +1$.  Therefore, if the BHR Conjecture holds for all $c$ in the range $x^2 -2x +1  \leq c < x^2 -x +1$ it holds for all greater values of~$c$ too, by the $x$-growability of the realizations for these values.  Hence if there is a counterexample, then there must be one with $c < x^2 - x + 1$, as required.
\end{proof}

Therefore, to prove $\BHR(L)$ for $U = \{ 1,2,x \}$ with $x$ even, it suffices to show that there is no counterexample with $v \leq (x-2) +  (x^2 - x)  + 1 = x^2 -1$, a finite process.  In any given case, we expect to achieve the result by checking many fewer cases than potentially required by this upper bound.  Indeed, in performing the computations for the following result, we did not encounter a situation where a multiset is admissible but an $x$-growable realization does not exist.

\begin{thm}\label{th:12x}
Let $4 \leq x \leq 12$ with $x$~even.  $\BHR(L)$ holds for multisets~$L$ with underlying set $\{ 1,2,x \}$. 
\end{thm}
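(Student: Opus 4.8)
\emph{Plan.} The strategy is to convert $\BHR(\{1,2,x\})$ into a finite verification, clear the small instances with Theorem~\ref{th:small}, and cover the rest with a bounded table of $x$-growable realizations, each of which settles an entire residue class of exponents via Corollary~\ref{cor:multigrow}.

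First I would reduce the problem. We may assume the underlying set is exactly $\{1,2,x\}$, i.e.\ $a,b,c\ge 1$, since otherwise it has size at most~$2$ and $\BHR$ is already known. By Theorem~\ref{th:pp12x} any counterexample has $a+b\le x-2$, and by the bound proved just above (an application of Lemma~\ref{lem:many_x}) any counterexample has $c<x^2-x+1$, hence $v=a+b+c+1\le x^2-1$. So it suffices to verify $\BHR(\{1^a,2^b,x^c\})$ for the finitely many admissible triples with $a,b,c\ge 1$, $a+b\le x-2$, and $v\le x^2-1$. For $x=4$ this forces $v\le 15$, so Theorem~\ref{th:small} alone disposes of $\{1,2,4\}$ (already known from~\cite{OPPS2}, as $\{1,2,4\}\subseteq\{1,2,3,4\}$); the cases $\{1,2,6\}$ and $\{1,2,8\}$ are already known from~\cite{PP14}; so the genuinely new work is $x=10$ and $x=12$, where $v\le 99$ and $v\le 143$ respectively.

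Second I would fix, for each such~$x$, a pair $(a,b)$ with $1\le a$, $1\le b$, $a+b\le x-2$ and a residue $r$ of $c$ modulo~$x$, and let $c_0$ be the least $c\equiv r\pmod{x}$ for which $\{1^a,2^b,x^c\}$ is admissible (if there is no such~$c_0$, the family is empty and there is nothing to prove). The core step is to exhibit a single $x$-growable realization of $\{1^a,2^b,x^{c_0}\}$. Since growing by~$x$ preserves $x$-growability, Corollary~\ref{cor:multigrow} with $X=\{x\}$ then produces realizations of $\{1^a,2^b,x^{c_0+kx}\}$ for every $k\ge 0$; as every admissible member of the residue class is at least~$c_0$ and congruent to~$c_0$ modulo~$x$, it equals $c_0+kx$ for some $k\ge 0$, so the family is completely covered. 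Ranging over all admissible $(a,b)$ and all residues~$r$ then covers every admissible $\{1^a,2^b,x^c\}$. The required realizations are assembled into tables indexed by $(a,b,\,c\bmod x)$, in the style of Section~\ref{sec:max5}; if a family's least admissible exponent already lies in the range $v\le 19$ or $v=23$ one may lighten the table by invoking Theorem~\ref{th:small} for that instance and starting the growth from $c_0+x$ instead, and if some $c_0$ failed to admit an $x$-growable realization one could restart the growth from a larger exponent and clean up finitely many small leftovers the same way.

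The main obstacle is scale together with the guarantee that the search always succeeds. For $x=12$ the bound $v\le 143$ means we must find $x$-growable Hamiltonian paths for pairs $(a,b)$ with $a+b$ as large as~$10$ over all twelve residues of~$c$, on the order of several hundred realizations, and the least admissible $c_0$ in a residue class can sit in a $K_v$ with $v$ appreciably larger than~$23$, so it genuinely has to be constructed rather than quoted from Theorem~\ref{th:small}. Thus the substantive content is an exhaustive computer search returning one $x$-growable realization per non-empty family, and what makes this terminate is the empirical fact recorded above: in this regime an admissible multiset of the form $\{1^a,2^b,x^c\}$ always admits an $x$-growable realization, consistent with the obstructions of Lemma~\ref{lem:inadgrow} being vacuous when $a+b\le x-2$ (for then $v=2x$ is inadmissible and $c\ge x+1$, so Items~1 and~2 cannot apply, and Items~3 and~4 concern growability we do not use). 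A minor additional chore is the admissibility bookkeeping---for fixed $(a,b)$ the divisor condition, chiefly at $d=2$ and at the divisors of~$x$, singles out exactly which exponents~$c$ are admissible---which the tables must reflect precisely.
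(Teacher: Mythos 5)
Your proposal is correct and follows essentially the same route as the paper: dispose of $x\in\{4,6,8\}$ via known results, reduce to $a+b<x-1$ by Theorem~\ref{th:pp12x}, and for $x=10,12$ cover each residue class of $c$ modulo~$x$ by a computer-found $x$-growable realization at its least admissible exponent, then grow via Corollary~\ref{cor:multigrow}. The paper phrases this as finding realizations for $k\le c\le k+x-1$ (one window per pair $(a,b)$, yielding 249 and 487 realizations respectively), which is the same finite table you describe.
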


\begin{proof}
The result is proved for $x \in \{ 4,6,8 \}$ in~\cite{PP14}.  

For each of the remaining cases we do the following.  For each pair $a,b$ with $a+b < x-1$, let~$k$ be the smallest value such that $\{1^a, 2^b, x^k \}$ is admissible.  We find $x$-growable realizations for $\{1^a, 2^b, x^c \}$ for $k \leq c \leq k+x-1$, excluding values of~$c$ such that  $\{1^a, 2^b, x^{c+ix} \}$ is inadmissible for all $i \geq 0$.  This covers all admissible multisets.

The required realizations are given in a text file on the ArXiv page for this paper, or are available on request from the authors.  There are~249 for the $x=10$ case and~487 for the $x=12$ case.  
 \end{proof}


\section{An Algorithm}\label{sec:alg}

The third and final approach to underlying sets of size~3 has much in common with that of Section~\ref{sec:max5}.  As in that section, let $U = \{ x,y,z \}$ and $L = \{ x^a,y^b,z^c \}$.  We again divide the work for a given~$U$ into $\pi(U)$ cases according to the combinations of congruences of~$a$,~$b$ and~$c$ modulo~$x$,~$y$ and~$z$ respectively.

Suppose we are working on one case and have covered all $(a,b,c)$ with $a < a_0$, $b < b_0$ or $c < c_0$ for some $(a_0, b_0, c_0)$ in the appropriate equivalence class modulo~$(x,y,z)$.  As in Section~\ref{sec:max5}, define $a_i$ to be $a_0 + ix$ and make similar definitions for~$b_i$ and $c_i$ with respect to~$y$ and~$z$ respectively.

Consider the following steps: 
\begin{enumerate}
\item  Find an $\{x,y\}$-growable realization for $(a_I, b_J, c_0)$ for some $I,J \geq 0$.

\item For each~$i$ with $0 \leq i < I$, find a $y$-growable realization for $(a_i, b_{j_i}, c_0)$, unless $(a_i, b, c_0)$ is inadmissible for all~$b$, and a realization for $(a_i, b_j, c_0)$ for all admissible triples with $0 \leq j < {j_i}$.

\item  For each~$j$ with  $0 \leq j < J$, find an $x$-growable realization for $(a_{i_j}, b_{j}, c_0)$, unless $(a, b_j, c_0)$ is inadmissible for all~$a$, and a realization for $(a_i, b_j, c_0)$ for all admissible triples with $I \leq i < {i_j}$.
\end{enumerate}

If we complete these steps, we claim that we have now also covered all $(a,b,c)$ with $c = c_0$:  Consider admissible $(a,b,c)$ and set $c = c_0$.  If $a$ and $b$ are large (that is, $a \geq a_I$ and $b \geq b_J$) then Item~1 implies the existence of the required realization.  If $a$ is small (that is, $a < a_I$), then Item~2 implies the existence of the required realization.  Finally, if $b$ is small (that is, $b < b_J$) then we get the required realization from Item~3.

The algorithm works as follows.  Start by setting $(a', b', c')$ with $0 < a' \leq x$, $0 < b'  \leq y$ and $0 < c' \leq z$ (that is, set these to the original $(a_0, b_0, c_0)$ of Section~\ref{sec:max5}).  If $(a',b',c')$ has a $U$-growable realization we have completed this case.  If not, complete the above steps, and we have completed all cases with $c = c'$.  We may now begin again by looking for a $U$-growable realization for $(a', b', c' + z)$ and continuing into the steps above if unsuccessful.  

There is no guarantee that the algorithm terminates.  However, if it does, we have a finite list of realizations that proves that case of the conjecture for multisets with underlying set~$U$.   

There are places in the algorithm where there are choices to be made.  These choices can help the algorithm run more quickly (or terminate at all).  The most important of these is that at each run through the steps we may choose which elements of the underlying sets are playing the roles of $x$, $y$ and $z$.  We may always choose~$z$ to be (one of) the elements that occurs least frequently in the multiset to avoid the possibility of only looking at the family $(a', b', c + kz)$, which might never have a $U$-growable realization.  In fact, our implementation begins at the end by first finding a $U$-growable realization for some $(a,b,c)$ with~$v$ as small as possible.  It then makes choices of orderings of~$x$,~$y$ and~$z$ that head towards this known finishing point.

Before turning to the algorithm to provide results, we give a theoretical construction that will make the process smoother.

\begin{lem}\label{lem:12x}
Let $L = \{ 1^a, 2^b, x \}$.  If~$L$ is admissible then it is realizable.  
\end{lem}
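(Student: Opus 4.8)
The plan is to show that any admissible multiset $L = \{ 1^a, 2^b, x \}$ (that is, with exactly one edge of length~$x$) is realizable by an explicit direct construction, and then observe that the construction produces an $x$-growable realization so that it meshes cleanly with the algorithm of this section. First I would unwind the admissibility condition. With $v = a+b+2$, the only divisors of~$v$ that can cause trouble are those dividing~$v$ that also divide~$2$, together with $d = x$ or $d$ a common divisor of $2$ and $x$; since there is only a single length-$x$ edge, the $d = x$ constraint is $1 \le v - x$, i.e. $v \ge x+1$, which is automatic once $x \le \lfloor v/2 \rfloor$. The genuinely restrictive case is $d = 2$: if $2 \mid v$ then the number of even-length edges (the $b$ edges of length~$2$, plus the length-$x$ edge if $x$ is even) must be at most $v - 2$. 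So admissibility amounts to a mild lower bound on the number of odd-length edges, i.e.\ a lower bound on $a$ in terms of the parity of $v$ and of $x$.

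Next I would build the Hamiltonian path. The backbone is a standard ``zig-zag'' path using lengths~$1$ and~$2$: on a block of consecutive labels one can freely interleave steps of length $1$ and $2$ to traverse all of them, and by the solved case $|U| \subseteq \{1,2\}$ (equivalently Theorem~\ref{th:small} for small~$v$, or \cite{DJ09,HR09}) we know exactly which multisets $\{1^{a'}, 2^{b'}\}$ are realizable on a path visiting a given set of consecutive vertices. The single length-$x$ edge is then used as a ``bridge'': split the vertex set $\{0,1,\dots,v-1\}$ into two consecutive intervals $I_1 = \{0,\dots,p-1\}$ and $I_2 = \{p,\dots,v-1\}$, realize an appropriate $\{1,2\}$-path on each interval ending/starting at vertices that differ by~$x$, and glue them with the length-$x$ edge. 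The freedom in choosing $p$, and in choosing which endpoints of each sub-path to use, gives enough room to distribute the $a$ ones and $b$ twos between the two intervals so that each sub-multiset is realizable; the admissibility inequality on~$a$ is exactly what guarantees at least one such split works, with a small-$v$ finite check (covered by Theorem~\ref{th:small}) mopping up boundary cases.

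Finally I would verify $x$-growability of the resulting realization at the bridge: choosing $m$ to be the larger endpoint of the length-$x$ edge (or nearby), the only edge of length exceeding~$2$ is that single bridge edge, so the growth embedding at~$m$ lengthens exactly the edges it must and no others, provided $x \le v/2$. This is the feature the algorithm wants, since it removes the $U = \{1,2,x\}$ layer as a potential non-terminating family.

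The main obstacle is the bookkeeping in the gluing step: one must show that for \emph{every} admissible $(a,b)$ there is a choice of split point and sub-path endpoints making both halves realizable $\{1,2\}$-multisets with the correct total counts and the correct endpoint parities (since a length-$x$ edge changes the parity of the vertex label by the parity of~$x$). Getting the parity/count arithmetic to line up in all residue cases of $v$ and $x$ modulo small numbers --- and identifying the finitely many genuinely small exceptional $(a,b)$ to hand off to Theorem~\ref{th:small} --- is where the real work lies; the rest is routine.
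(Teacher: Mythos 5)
Your skeleton is the paper's: a single length-$x$ ``bridge'' edge joining two consecutive intervals of labels, each traversed by a path using only lengths $1$ and $2$. But there are two genuine problems. First, your concluding claim that the resulting realization is $x$-growable is false, and in fact contradicts Lemma~\ref{lem:inadgrow}, Item~2: with $c=1<x-2$ (the paper reduces to $x>6$), no $x$-growable realization of $\{1^a,2^b,x\}$ can exist, because $x$-growability at $m$ forces each of the $x-2$ vertices $m-x+1,\ldots,m-2$ to be incident with an edge lengthened by the growth embedding, any such edge must have length exceeding~$2$, and there is only one edge of length~$x$ available. The lemma does not claim growability and the paper never needs it: the realizations are used (after the division-by-$3$ automorphism) only to dispose of the $c=1$ subcases of $\{3^a,6^b,x^c\}$ outright, so the algorithm may simply assume $c\geq 2$.

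Second, the part you label as ``where the real work lies'' is the entire content of the proof, and as sketched it is not closable from what you invoke: the solved two-element case tells you which multisets $\{1^{a'},2^{b'}\}$ admit \emph{some} Hamiltonian realization, not which ones admit a Hamiltonian path of a prescribed interval with prescribed endpoints, which is what your gluing step requires. The paper avoids this entirely with an explicit device: start from $[v-1,v-2,\ldots,x,0,1,\ldots,x-1]$, which realizes $\{1^{v-2},x\}$ with bridge $\{x,0\}$, then replace a terminal run of $k+1$ consecutive labels at either end by the zig-zag $\sigma_k+y$, converting $k-1$ differences of~$1$ into $k-1$ differences of~$2$. This reaches every $a$ with $2\leq a\leq v-2$ directly, and the one remaining case $a=1$ is handled by two explicit paths according to the parity of~$v$; no split-point search, no parity bookkeeping, and no appeal to Theorem~\ref{th:small} is needed. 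To salvage your version you would need to either adopt that substitution trick or first prove the endpoint-constrained $\{1,2\}$-realizability statement you are implicitly using.
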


\begin{proof}
By Corollary~\ref{cor:max5}, Theorem~\ref{th:12x} and the definition of edge-length we may assume that~$6 < x \leq v/2$.  We may also assume that $a,b \geq 1$, as the BHR~Conjecture is settled for all two-element sets~\cite{DJ09,HR09}, and that~$x$ is odd, as the BHR Conjecture is settled for $\{ 1^a, 2^b, x^c \}$ when~$x$ is even and $a+b \geq x-1$~\cite{PP14} 

We first define a sequence that has the multiset $\{1, 2^{k-1} \}$ as differences.  If~$k$ is odd, let
$$\sigma_k = 0, 2, 4, \ldots, k-1, k, k-2, k-4, \ldots, 1$$
and if $k$ is even let
$$\sigma_k = 0, 2, 4, \ldots,  k, k-1, k-3, k-5, \ldots, 1.$$
Denote the sequence obtained  from~$\sigma_k$ by adding~$y$ to each element to be $\sigma_k + y$.

Given a sequence that ends with~$k+1$ consecutive numbers $y, y+1, \ldots, y + k$, we may replace those numbers with $\sigma_k + y$ to replace the~$k-1$ of realized differences of~1 with $k-1$ realized differences of~2.

Consider the sequence
$$ v-1, v-2, \ldots, x ; 0, 1, \ldots x-1. $$
This is a Hamiltonian path that realizes~$\{1^{v-2}, x\}$ and either side of the semi-colon is a sequence of consecutive numbers.  We may replace subsequences at one or both ends with sequences of the form $\sigma_k+y$ to obtain a sequence that realizes $\{ 1^a, 2^b, x \}$ for any~$a$ in the range~$2 \leq a \leq v-2$.

This leaves the case~$a=1$.  If~$v$ is even then the Hamiltonian path
$$ v-2, v-4, \ldots, x+1, x-1, x-2, x-4, \ldots, 1, v-1, v-3,  \ldots, x, 0, 2, 4, \ldots, x-3$$
where all differences are~2 except between $x-1$ and $x-2$ and between $x$ and 0, realizes~$\{1, 2^{v-3}, x \}$.  If $v$ is odd, the same is true of
$$ v-2, v-4, \ldots, x+2, x, 0, 2, 4, \ldots, x-1, x-2, x-4, \ldots, 1, v-1, v-3, \ldots, x+1.$$
\end{proof}

\begin{exa}\label{ex:12x}
Let $v=16$ and $x=7$; here are some realizations of multisets of the form~$\{ 1^a, 2^b, 7 \}$ obtainable from the proof of~Lemma~\ref{lem:12x}.  The initial Hamiltonian path that realizes $\{ 1^{14}, 7 \}$ is
$$[ 15, 14, 13, 12 ,11, 10, 9, 8, 7, 0, 1,2,3,4,5,6 ].$$
To realize $\{ 1^{10}, 2^4, 7 \}$, we can replace the last six elements with $\sigma_5 + 1$ as follows:
$$[ 15, 14, 13, 12 ,11, 10, 9, 8, 7, 0, 1, 3, 5, 6, 4, 2 ].$$
To realize $\{ 1^{7}, 2^7, 7 \}$ we can replace the first five elements of this new sequence with $\sigma_4 + 11$, reversed:
$$[ 12, 14, 15, 13 ,11, 10, 9, 8, 7, 0, 1, 3, 5, 6, 4, 2 ].$$
The Hamiltonian path that realizes~$\{1, 2^{12}, 7 \}$ is
$$[ 14, 12, 10, 8,6, 5,3,1, 15,13, 11, 9, 7, 0,2,4 ].$$
\end{exa}

We can now use the algorithm to prove the main result of the section.

\begin{thm}\label{th:alg}
$\BHR(L)$ holds for multisets~$L$ whose underlying set is one of the following~$25$ sets of size~$3$:
$$ \{1,2,7\}, \{1,2,9\},  \{1,3,6\} ,  \{1,3,7\},   \{1,3,8\}, \{1,4,6\},   \{1,4,7 \},  \{ 1,5,6\},  $$
$$ \{ 1,5,7 \} , \{ 1,6,7 \} , \{2,3,6\}  , \{2,3,7 \}, \{2,4,7\},  \{2,5,6\},  \{2,5,7\}, \{2,6,7 \},  $$
$$  \{3,4,6\},  \{3,4,7\},  \{3,5,6\}, \{3,5,7\}, \{3,6,7 \},  \{4,5,6\} , \{4,5,7\}, \{4,6,7\}, \{5,6,7\}  . $$
\end{thm}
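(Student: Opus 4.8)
The plan is to run the algorithm of this section, for each of the 25 underlying sets~$U$ in turn, and to report the finite lists of realizations that it produces. For a fixed~$U = \{x,y,z\}$ we partition the problem into the $\pi(U) = xyz$ cases according to the residues of $(a,b,c)$ modulo~$(x,y,z)$, exactly as in Section~\ref{sec:max5}. Within each case we seek a $U$-growable realization for $(a_0,b_0,c_0)$ (or for $(a_0,b_0,c_k)$ for a small~$k$), and if that fails we execute the three steps above to reduce the open subcases in the $c$-direction to a bounded region, increment~$c$ by~$z$, and repeat. The output of a successful run is a finite set~$H$ of realizations with the covering property described in Section~\ref{sec:max5}: for every admissible $L=\{x^a,y^b,z^c\}$ in the given case there is an $\bm{h}\in H$ realizing some $\{x^{a'},y^{b'},z^{c'}\}$ with $(a',b',c')\preceq(a,b,c)$ and with $\bm{h}$ growable in whichever of the three coordinates it needs to be. Combined with Corollary~\ref{cor:multigrow} this proves $\BHR(L)$ for all sufficiently large~$v$ in that case, and the finitely many small leftover multisets are handled by Theorem~\ref{th:small} (i.e.\ $v\le 19$ or $v=23$).

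Several ingredients make the search tractable and are worth invoking explicitly. Lemma~\ref{lem:inadgrow} is used throughout to prune the search tree: whenever the parameters $(a,b,c)$ for a prospective growable realization violate one of its four conditions, we know no such realization exists and we do not search for it; instead we push the needed index ($I$, $J$, or the $i_j$, $j_i$) higher and pick up the omitted multiset as one of the ``all admissible triples'' realizations in Steps~2 and~3. Lemma~\ref{lem:12x} supplies, for the five sets of the form $\{1,2,x\}$ appearing here ($\{1,2,7\}$ and $\{1,2,9\}$), and more generally for $\{1,y,z\}$-type bookkeeping, a ready-made realization of every admissible multiset whose largest element occurs exactly once, so the algorithm never has to search for those. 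The choice of which element plays the role of~$z$ (taken to occur least often) and the ``work backwards from a small-$v$ $U$-growable realization'' heuristic described in the text are what give the algorithm a realistic chance of terminating; when it stalls in one coordinate ordering we retry with another.

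The verification that each claimed sequence is in fact a realization with the stated growability is purely mechanical: for $\bm{h}=[h_1,\dots,h_v]$ one checks that $\{h_1,\dots,h_v\}=\{0,\dots,v-1\}$, that the multiset $\{\ell(h_i,h_{i+1}): 1\le i<v\}$ equals the target $\{x^{a'},y^{b'},z^{c'}\}$, and that for each $x_0\in X$ there is an~$m$ at which the conditions defining $x_0$-growability hold (each vertex in $(m-x_0,m]$ meets exactly one edge whose length the growth embedding increases, and no other edge is lengthened). We do this by computer for the full list, which is large (the text notes hundreds of realizations per set for the harder cases) and is therefore deposited as a text file on the arXiv page; we only display representative tables here. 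So the proof reduces to: (i) state the covering scheme per case, (ii) exhibit/deposit the realizations, (iii) cite Theorem~\ref{th:grow}/Corollary~\ref{cor:multigrow}, Lemma~\ref{lem:inadgrow}, Lemma~\ref{lem:12x} and Theorem~\ref{th:small}.

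The main obstacle I anticipate is not any single hard lemma but the termination and completeness of the case analysis: there is no a priori guarantee the algorithm halts on a given case, and for the more recalcitrant sets (those with three relatively large, pairwise coprime or nearly coprime elements, e.g.\ $\{5,6,7\}$, $\{4,6,7\}$, $\{3,5,7\}$) one may have to experiment with coordinate orderings, with how far to push $I$ and $J$, and with the starting value of~$c$ before a finite covering emerges; and once it does, one must be careful that the ``small exceptions'' region genuinely falls under $v\le 19$ or $v=23$ rather than merely being small. Establishing that—for all $25$ sets simultaneously, with no gap in any of the $\sum_U \pi(U)$ congruence classes—is the delicate part, and it is discharged by the explicit deposited data together with the bookkeeping bounds $v \le a_I + b_J + c_0 - x - y + 1$ (and their analogues) coming from Steps~1--3.
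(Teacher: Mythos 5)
Your overall framework matches the paper's: run the algorithm of this section case by case for each of the $25$ sets, deposit the (machine-verified) finite lists of realizations, and sweep up the small leftover multisets with Theorem~\ref{th:small}. But there is a concrete gap in how you handle the five underlying sets of the form $\{3,6,x\}$ in the list, namely $\{1,3,6\}$, $\{2,3,6\}$, $\{3,4,6\}$, $\{3,5,6\}$ and $\{3,6,7\}$. For these the algorithm does not terminate as you describe it: $\{3,6\}$-growable realizations for multisets of the form $\{3^a,6^b,x^1\}$ appear not to exist at all, so the layer of the case analysis in which $x$ occurs exactly once can never be closed off by Steps 1--3, and no amount of re-ordering the roles of the three elements or pushing $I$ and $J$ higher will repair this. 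Your only contingency for a stalled run is to ``retry with another coordinate ordering,'' which does not address the obstruction.

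The missing idea is an automorphism reduction. A multiset $\{3^a,6^b,x\}$ is admissible only when $v=a+b+2$ is not a multiple of $3$, so division by $3$ modulo $v$ is an automorphism of $\Z_v$; it carries the problem to the multiset $\{1^a,2^b,\pm x/3\}$, every admissible instance of which is realizable by Lemma~\ref{lem:12x}. This disposes of all admissible multisets in which $x$ appears exactly once, and the algorithm is then run only on the subcases where $x$ appears at least twice, for which it does terminate. You do invoke Lemma~\ref{lem:12x}, but for a different purpose (the sets $\{1,2,7\}$ and $\{1,2,9\}$, where it is not actually needed), and your aside that it helps ``more generally for $\{1,y,z\}$-type bookkeeping'' overstates the lemma, which concerns only multisets of the form $\{1^a,2^b,x\}$. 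Aside from this, your account of the covering scheme, the mechanical verification of individual realizations, and the roles of Lemma~\ref{lem:inadgrow}, Corollary~\ref{cor:multigrow} and Theorem~\ref{th:small} is accurate and consistent with the paper.
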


\begin{proof}

We use the above algorithm, implemented in GAP and available on the ArXiv page for this paper or by request from the authors, to generate most of the required realizations to prove the result.   

The algorithm is unsuccessful as it stands for underlying sets of the form $\{ x,3,6 \}$.   In particular, $\{3,6\}$-growable realizations for multisets of the form~$\{3^a, 6^b, x \}$ appear to not exist.  We circumvent this issue by using Lemma~\ref{lem:12x} as follows.

A multiset $L = \{3^a, 6^b, x \}$ is admissible only if $v = a+b+1+1$ is not a multiple of~3.  In this case, division by~$3 \pmod{v}$ is an automorphism of~$\Z_v$ and applying it to~$L$ we get an equivalent problem for the multiset $L' = \{ 1^a, 2^b, \pm x/3 \}$.  This equivalent problem has a realization by Lemma~\ref{lem:12x}.

Hence when applying the algorithm to underlying sets of the form~$\{ 3,6,x \}$ we may assume that~$x$ appears at least twice in any multiset we need to investigate.  This is sufficient for the algorithm to go through for the five cases of this form here.

In the order of the statement of the theorem, the number of realizations produced by the program for each underlying set is
$$ 198, 578, 140, 263, 402, 226, 361 , 296,     $$
$$  405, 542, 172, 310, 453, 314, 718, 576,     $$
$$  285, 567,  367, 598, 676, 390, 685, 808,  774  $$
for a total of 11,104 realizations.
The file containing all of the realizations required for this proof in GAP-readable format is also available on the ArXiv page or by request from the authors.
\end{proof}

\section{Concluding Remarks}\label{sec:final}

We now have everything we need to prove the main result.

\begin{proof}[Proof of Theorem~\ref{th:main}]
Let $U = \{ x,y,z \}$ and $L = \{ x^a, y^b, z^c \}$.   If $\max{(U)} \leq 5$ then Corollary~\ref{cor:max5} gives the result.  

Consider $U$ with $\max{(U)} = 6$.  The set $\{1,2,6\}$ is covered in~\cite{PP14}.  If $U = \{2,4,6\}$ and $v$ is even, then~$L$ is inadmissible.  If $v$ is odd then we may take the solution for $\{ 1^a, 2^b, 3^c \}$ from~\cite{CD10} and apply the $\Z_v$-automorphism $t \mapsto 2t$.  Otherwise, $U$ is covered by Theorem~\ref{th:alg}.

All cases with $\max{(U)} = 7$ are directly covered by Theorem~\ref{th:alg}.   When $\max{(U)} > 7$ we have five cases to consider: $\{ 1,2,8 \}$, $\{1,2,9\}$, $\{1,2,10\}$, $\{1,2,12\}$ and $\{1,3,8\}$.  The set $\{1,2,8\}$ is covered in~\cite{PP14}, the sets~$\{ 1,2, 9\}$ and~$\{1,3,8\}$ are covered by Theorem~\ref{th:alg}, and the sets~$\{1,2,10\}$ and~$\{1,2,12\}$ are covered by Theorem~\ref{th:12x}.
\end{proof}

We have presented three approaches to completely solving instances of the BHR~Conjecture for a given three-element underlying set.   Each has its advantages and its drawbacks when considering how they might make further inroads into the problem.

The approach of Section~\ref{sec:max5} has the advantage of being human-readable and allows insight into the structure of the problem at hand to reduce the number of realizations required for a particular underlying set.  However, the number of realizations that are required is large and will quickly grow larger as the elements in the underlying sets do.  More insight into how to construct growable realizations is needed to get anywhere near being able to implement the approach without resorting to a computer search.

We believe that the approach of Section~\ref{sec:12x} offers the most hope for the most substantial theoretical advances: coupling non-growable techniques with growable ones gives a way to harness other theoretical approaches and complement them with the theory of growable realizations.  Of course, this requires the concomitant development of other theoretical approaches.

The approach of Section~\ref{sec:alg} also offers hope.  Might one prove that the steps of the algorithm are guaranteed to succeed, hence proving the conjecture for some underlying sets (perhaps only for a portion of the $\pi(U)$ cases, or with a finite number of missing subcases) without needing to run the programs?  If there is a specific three-element underlying set of interest, with elements that are not too large, then the Section~\ref{sec:alg} approach is probably the one that will most quickly lead to a resolution, given current tools.

\section*{Acknowledgements}

We are grateful to Onur Agirseven, Kat Cannon-MacMartin, Eamon Mahoney, Anita Pasotti, Marco Pellegrini and John Schmitt for conversations about the BHR Conjecture.   The majority of this work was completed in the summer of~2021 under the  auspices of the School of Communication Research Co-Curricular Program at Emerson College.

\end{document}